\font\bbbld=msbm10 scaled\magstephalf
\newcommand{\bfH}{\hbox{\bbbld H}}
\newcommand{\bfR}{\hbox{\bbbld R}}
\newcommand{\va}{{\bf a}}
\newcommand{\vb}{{\bf b}}
\newcommand{\ve}{{\bf e}}
\newcommand{\la}{\langle}
\newcommand{\ra}{\rangle}
\newcommand{\vn}{{\bf n}}
\newcommand{\vx}{{\bf x}}
\newcommand{\e}{\varepsilon}
\newcommand{\goto}{\rightarrow}
\newcommand{\ol}{\overline}
\newcommand{\be}{\begin{equation}}
\newcommand{\ee}{\end{equation}}
\newcommand{\bea}{\begin{eqnarray}}
\newcommand{\eea}{\end{eqnarray}}
\newcommand{\tD}{{\tilde D}}
\newcommand{\tg}{{\tilde g}}
\newcommand{\thh}{{\tilde h}}
\newcommand{\tkappa}{{\tilde \kappa}}
\newcommand{\tnabla}{\tilde{\nabla}}
\newtheorem{theorem}{Theorem}[section]
\newtheorem{lemma}[theorem]{Lemma}
\newtheorem{corollary}[theorem]{Corollary}
\theoremstyle{definition}
\theoremstyle{remark}
\numberwithin{equation}{section}
\begin{document}
\setlength{\baselineskip}{1.2\baselineskip}

\title[Spacelike hypersurfaces of constant curvature]
{Convex Spacelike Hypersurfaces of Constant Curvature in De Sitter Space}

\author{Joel Spruck}
\address{Department of Mathematics, Johns Hopkins University,
 Baltimore, MD 21218}
\email{js@math.jhu.edu}
\author{Ling Xiao}
\address{Department of Mathematics, Johns Hopkins University,
 Baltimore, MD 21218}
\email{lxiao8@math.jhu.edu}
\thanks{Research of the first author was supported in part NSF grant DMS 0904009.}

\maketitle

\begin{abstract}
We show that for a very general and natural class of curvature functions (for example the curvature quotients
$(\sigma_n/\sigma_l)^{\frac{1}{n-l}}$) the problem of finding a complete spacelike strictly convex hypersurface 
in de Sitter space satisfying
$f(\kappa)=\sigma \in (1,\infty)$ with a prescribed compact  future asymptotic boundary $\Gamma$ at infinity
has at least one smooth solution (if $l=1$ or $l=2$ there is uniqueness). This is the exact analogue of the asymptotic plateau problem in Hyperbolic space
 and is in fact a precise dual problem. By using this duality we obtain for free the existence of strictly convex solutions to the asymptotic Plateau problem for $\sigma_l=\sigma,\,1 \leq l <n$ in both deSitter and Hyperbolic space.
\end{abstract}

\section{Introduction}
\label{gs-I}
\setcounter{equation}{0}

Let $\bfR^{n+1,1}$ be the $(n+2)$ dimensional Minkowski space, that is,  the real vector space
$\bfR^{n+2}$ endowed with the Lorentz metric
\be \label{eq1.10}
\langle u,v\rangle =u_0 v_0+\ldots +u_n v_n-u_{n+1}v_{n+1}
\ee for all $u,v \in \bfR^{n+2}$. The one sheeted hyperboloid
\[dS_{n+1}=\{p\in \bfR^{n+2}| \la p,p\ra \,=1 \}   \]
consisting of all unit spacelike vectors and equipped with the induced metric is called de Sitter space. It is a geodesically complete simply connected Lorentzian manifold with constant curvature one . De Sitter space corresponds to  a
vacuum solution of the Einstein equations with a positive cosmological constant. \\

Choose a non-zero null vector $\va\in\bfR^{n+1,1}$ in the past half of the null cone with vertex at the origin, i.e. $\la \va,\va \ra=0,\, \la \va,\ve\ra >0$ where $\ve=(0,\ldots,0,1)$. Then the open region of de Sitter space defined by
\be \label{eq1.20}
\mathcal{H}=\{p\in dS_{n+1}|\la p,\va \ra >0\}
\ee
is called the {\em steady state space}. Since the  steady state space is only half the de Sitter space, it is incomplete. Its boundary as a subset of $dS_{n+1}$ is the null hypersurface
\be \label{eq1.30}
L_0=\{p\in dS_{n+1}|\la p,\va \ra =0\}
\ee
which represents the past infinity of $\mathcal{H}$. The spacelike hypersurfaces
\be \label{eq1.40}
L_{\tau}=\{p\in dS_{n+1}|\la p,\va \ra=\tau,\,0<\tau <\infty\}
\ee
are umbilic hypersurfaces of de Sitter space with constant mean curvature one with respect to the past oriented unit normal $N_{\tau}(p)=-p+\frac1{\tau}\va$ and foliate
the steady state space. The limit boundary $L_{\infty}$ represents a spacelike future infinity for timelike and null lines of de Sitter space \cite{HE}.\\

In this paper we are interested in finding complete spacelike (i.e. the induced metric is Riemannian) strictly locally convex immersions 
$\psi: \Sigma^n \goto \mathcal{H}^{n+1}$ with constant curvature and with prescribed (compact) future asymptotic boundary $\Gamma$ . That is we want to find $\Sigma$ satisfying
\bea \label{eq1.45} 
f(\kappa[\Sigma])&=&\sigma \\
\partial \Sigma&=&\Gamma \label{eq1.46}
\eea
where $\kappa[\Sigma]=(\kappa_1, \ldots,\kappa_n)$ denote  the positive  principal curvatures of $\Sigma$ (in the induced de Sitter metric with respect to the future oriented unit normal $N$) and $\sigma>1$ is a constant. This is the exact analogue of the problem considered by Guan and Spruck  \cite{GS11} in hyperbolic space and as we shall make precise later, the two problems are essentially dual equivalent problems. Our study was motivated by the beautiful paper of Montiel \cite{Montiel1} who treated the case of mean curvature.\\

 As  in  our earlier work \cite{NS96, RS94, GS00, GSS, GS08, GS11}, we prefer to use the half space model because we find it has great advantages. Following Montiel \cite{Montiel1} we can define a half space model for the steady state space  in the following way. Define the map 
 $\phi: \bfR^{n+1,1}\setminus\{p\in \bfR^{n+1,1}| \la p,a \ra=0\}\goto \bfR^n\times \bfR=\bfR^{n+1}$ given by
\be \label{eq1.50}
\phi(p)=\frac1{\la p,\va \ra}(p-\la p,\va \ra \vb-\la p,\vb \ra \va,1)
\ee
where $\vb\in R^{n+2}$ is a null vector such that $<\va,\vb>=1$( $\vb$ is in the  future directed null cone and $\bfR^n$ stands for the orthogonal complement of the Lorentz plane spanned by $\va$ and $\vb$). Then the image of $\mathcal{H}$ by the map $\phi$  lies in the half space $\bfR^{n+1}_+=\bfR^n\times \bfR_+$ and $\phi$  restricted to $\mathcal{H}$ is a diffeomorphism to $\bfR^{n+1}_+$. Moreover for $v\in T_p\mathcal{H}^{n+1}=T_p  dS_{n+1}$,
\be \label{eq1.60}
(d\phi)_p(v)=\frac1{\la p,\va \ra}(v-\la v,\va \ra \vb-\la v,\vb \ra \va,0)-\frac{\la v, \va \ra}{\la p, \va \ra^2} (p-\la p,\va \ra \vb-\la p,\vb \ra \va,1).
\ee
It follows that
\[\la (d\phi)_p(v),(d\phi)_p(v) \ra=\frac1{\la p, \va \ra^2}\la v, v \ra~.\]
Hence the map $\phi: \mathcal{H}^{n+1} \goto \bfR^{n+1}_+=\bfR^n\times \bfR_+$ is an isometry if $\bfR^{n+1}_+$ is endowed with the Lorentz metric
\be \label{eq1.70}
g_{(x,x_{n+1})}=\frac1{x_{n+1}^2}(dx^2-dx_{n+1}^2),
\ee
which is called the  half space model for $\mathcal{H}^{n+1}$. 
It is important to note that {\em the isometry $\phi$ reverses the time orientation.}\\

Thus $\partial_\infty \mathcal{H}^{n+1}$ is naturally identified with
$\bfR^n = \bfR^n \times \{0\} \subset \bfR^{n+1}$ and (\ref{eq1.46}) may
be understood in the Euclidean sense. For convenience we say $\Sigma$ has
compact asymptotic boundary if
$\partial \Sigma \subset \partial_\infty \mathcal{H}^{n+1}$ is compact with respect
the Euclidean metric in $\bfR^n$.\\

The curvature function $f(\lambda)$ in \eqref{eq1.45} is assumed to satisfy the fundamental structure
conditions in the convex cone
\begin{equation}
\label{eq1.80}
K:= K^+_n := \big\{\lambda \in \bfR^n:
   \mbox{each component $\lambda_i > 0$}\big\}:
 \end{equation}
\be \label{eq1.85}
\mbox{$f$ is symmetric,}
\ee
\begin{equation}
\label{eq1.90}
f_i (\lambda) \equiv \frac{\partial f (\lambda)}{\partial \lambda_i} > 0
  \;\; \mbox{in $K$}, \;\; 1 \leq i \leq n,
\end{equation}
\begin{equation}
\label{eq1.100}
\mbox{$f$ is a concave function in $K$},
\end{equation}
\be \label{eq1.110}
\mbox{the dual function $f^*(\lambda)=(f(\frac1{\lambda_1},\ldots,\frac1{\lambda_n}))^{-1}$ is also concave in K},
\ee
\begin{equation}
\label{eq1.120}
 f > 0 \;\;\mbox{in $K$},
  \;\; f = 0 \;\;\mbox{on $\partial K$}
\end{equation}
 In addition, we shall assume that $f$ is normalized
\begin{equation}
\label{eq1.130}
f(1, \dots, 1) = 1,
\end{equation}
\begin{equation}
\label{eq1.140}
\mbox{ $f$ is homogeneous of degree one}
\end{equation}
and satisfies the following more technical assumption
\begin{equation}
\label{eq1.150}
\lim_{R \rightarrow + \infty}
   f (\lambda_1, \cdots, \lambda_{n-1}, \lambda_n + R)
    \geq 1 + \varepsilon_0 \;\;\;
\mbox{uniformly in $B_{\delta_0} ({\bf 1})$}
\end{equation}
for some fixed $\varepsilon_0 > 0$ and $\delta_0 > 0$,
where $B_{\delta_0} ({\bf 1})$ is the ball
of radius $\delta_0$ centered at ${\bf 1} = (1, \dots, 1) \in \bfR^n$.\\

The assumption \eqref{eq1.110} is closely related to the well-known fact \cite{Oliker}, \cite{Gerhardt} , \cite{ Sch}
that the Gauss map $\vn$ of 
a spacelike locally strictly convex hypersurface $\Sigma^n $ in de Sitter space is an embedding into  hyperbolic space $H^{n+1}$ which inverts principal curvatures. We shall formulate a precise
global version of this correspondence (see Theorem \ref{th2.1} and Corollary \ref{cor2.1} in Section  \ref{sec1.5} )
which will be important for our deliberations. For the moment note that if 
$f = (\sigma_n/\sigma_l)^{\frac{1}{n-l}}$, $0 \leq l <  n$, defined
in $K$ where $\sigma_l$ is the normalized $l$-th elementary
symmetric polynomial ($\sigma_0 =1$), then $f^*(\lambda)=  (\sigma_{n-l}(\lambda))^{\frac{1}{n-l}}$.
Also one easily computes that
\[\lim_{R \rightarrow + \infty}
   f (\lambda_1, \cdots, \lambda_{n-1}, \lambda_n + R)
  = \Big(\frac{n}l\Big)^{\frac1{n-l}}~.\]

  Since $f$ is symmetric, by (\ref{eq1.100}),
(\ref{eq1.130}) and (\ref{eq1.140}) we have
\begin{equation}
\label{eq1.160}
f (\lambda) \leq f ({\bf 1}) + \sum f_i ({\bf 1}) (\lambda_i - 1)
= \sum f_i ({\bf 1}) \lambda_i  = \frac{1}{n} \sum \lambda_i
\;\;\mbox{in $K \subset K_1$}
\end{equation}
and
\begin{equation}
\label{eq1.170}
 \sum f_i (\lambda) = f (\lambda) + \sum f_i (\lambda) (1 - \lambda_i)
\geq f ({\bf 1}) = 1 \;\;\mbox{in $K$}.
\end{equation}
Using \eqref{eq1.110}, we see that $\sum \lambda_i^2 f^*_i(\lambda)=(f^*)^2\sum f_i(\frac1{\lambda})\geq (f^*)^2$. Since $(f^*)^*=f$, it follows that
\be \label{eq1.180}
\sum \lambda_i^2 f_i \geq f^2 \;\;\mbox{in $K$}.
\ee

 In this paper all
hypersurfaces in $\mathcal{H}^{n+1}$ we consider are assumed to be
connected and orientable. If $\Sigma$ is a complete spacelike hypersurface in
$\mathcal{H}^{n+1}$ with compact asymptotic boundary at infinity, then the
normal vector field of $\Sigma$ is chosen to be the one pointing to
the unique unbounded region in $\bfR^{n+1}_+ \setminus \Sigma$, and
the (both de Sitter and Minkowski) principal curvatures of $\Sigma$
are calculated with respect to this normal vector field.\\

Because $\Sigma$ is strictly locally convex and strictly spacelike,  we are forced to take
 $\Gamma=\partial \Omega$ where
$\Omega \subset \bfR^n$ is a smooth domain and seek $\Sigma$ as  the
graph of  a ``spacelike'' function $u(x)$ over $\Omega$, i.e.
\be \label{eq1.185}
 \Sigma=\{(x,x_{n+1}): x\in \Omega,~ x_{n+1}=u(x)\},  \hspace{.1in} |\nabla u| <1,\,\mbox{in $\ol{\Omega}$}.
 \ee
  
We will compute the  first and second fundamental forms $g_{ij},\, h_{ij}$ with respect to the induced de Sitter metric
as well as $\tilde{g}_{ij},\,\tilde{h}_{ij}$ the corresponding forms in the induced Minkowski metric viewing $\Sigma$
as a graph in the Minkowski space $\bfR^{n,1}$ with unit normal $\nu$. We use
 $$X_i=e_i+u_ie_{n+1},\;\;\ \vn=u \nu= u\frac{u_ie_i+e_{n+1}}{w},$$
 where $w=\sqrt{1-|\nabla u|^2}.$ The first fundamental form $g_{ij}$ is then given by
 \be\label{eq1.190}
 g_{ij}=\langle X_i, X_j\rangle_D=\frac{1}{u^2}(\delta_{ij}-u_iu_j)=\frac{\tg_{ij}}{u^2}.
 \ee
 For computing the second fundamental form we use
\be\label{eq1.200}
\Gamma^{n+1}_{ij}=-\frac{1}{x_{n+1}}\delta_{ij},\; \Gamma^k_{in+1}=-\frac{1}{x_{n+1}}\delta_{ik}
\ee
to obtain
\be\label{eq1.210}
\nabla_{X_i}X_j=\left(-\frac{\delta_{ij}}{x_{n+1}}+u_{ij}-\frac{u_iu_j}{x_{n+1}}\right)e_{n+1}-\frac{u_je_i+u_ie_j}{x_{n+1}}.
\ee
Then
\be\label{eq1.230}
\begin{aligned}
h_{ij}&=\langle\nabla_{X_i}X_j, u\nu\rangle_D=\frac{1}{uw}\left(\frac{\delta_{ij}}{u}-u_{ij}+\frac{u_iu_j}{u}-2\frac{u_iu_j}{u}\right)\\
&=\frac{1}{u^2w}(\delta_{ij}-u_iu_j-uu_{ij})=\frac{\thh_{ij}}{u}+\frac{\tg_{ij}}{u^2w}.
\end{aligned}
\ee
 The principal curvature $\kappa_i$ of $\Sigma$ in de Sitter space are the roots of the characteristic equation
 $$\det(h_{ij}-\kappa g_{ij})=u^{-n}\det\left(\thh_{ij}-\frac{1}{u}\left(\kappa-\frac{1}{w}\right)\tg_{ij}\right)=0.$$
Therefore,
\begin{equation}
\label{eq1.240}
\kappa_i = u \tilde{\kappa}_i + \frac{1}{w}, \;\;\; i = 1, \cdots, n.
\end{equation}

Note that from \eqref{eq1.230}, $\Sigma$ is locally strictly convex if and only if 
\be \label{eq1.245}
\mbox{ $x^2-u^2 $ is (Euclidean) locally strictly convex~.}
\ee
As in our earlier work,  we write the Minkowski principal curvatures $\tkappa[\Sigma]$ as the eigenvalues of the symmetric matrix $\tilde{A}[u]=\{\tilde{a}_{ij}\}:$
\be\label{eq1.246}
\tilde{a}_{ij}:=-\frac{1}{w}\gamma^{ik}u_{kl}\gamma^{lj}
\ee
where
\be\label{eq1.247}
\gamma^{ij}=\delta_{ij}+\frac{u_iu_j}{w(1+w)}.
\ee
By \eqref{eq1.240} the de Sitter principal curvatures $\kappa[u]$ of $\Sigma$ are the eigenvalues of the symmetric matrix $A[u]=\{a_{ij}[u]\}:$
\be\label{eq1.248}
a_{ij}[u]:=\frac{1}{w}\left(\delta_{ij}-u\gamma^{ik}u_{kl}\gamma^{lj}\right).
\ee

Define
\be \label{eq1.249}
 F(A):=f(\kappa[A])  \hspace{.1in} \mbox{ and \,\,$G(D^2u, Du, u):=F(A[u])$  }
 \ee
 where
 $A[u]=\{a_{ij}[u]\}$ is given by \eqref{eq1.248}. 
Problem (\ref{eq1.45})-(\ref{eq1.46}) then reduces to a
Dirichlet problem for a fully nonlinear second order equation 
\begin{equation}
\label{eq1.250}
G(D^2u, Du, u) = \sigma>1,
\;\; u > 0 \;\;\; \text{in $\Omega \subset \bfR^n$}
\end{equation}
with the boundary condition
\begin{equation}
\label{eq1.260}
             u = 0 \;\;\;    \text{on $\partial \Omega$.}
\end{equation}

We seek solutions of equation (\ref{eq1.250}) satisfying the spacelike condition \eqref{eq1.85} and 
(\ref{eq1.245}). Following the literature we call such solutions
{\em admissible}. By \cite{CNS3} condition~(\ref{eq1.245}) implies
that equation (\ref{eq1.250}) is elliptic for admissible solutions.
Our goal is to show that the Dirichlet problem
(\ref{eq1.250})-(\ref{eq1.260}) admits smooth admissible solutions
for all $ \sigma>1$ which is optimal.

Our main result of the paper may be stated as follows.

\begin{theorem}
\label{th1}
Let $\Gamma = \partial \Omega \times \{0\} \subset \partial_{\infty}\mathcal{H}^{n+1}$
where $\Omega$ is a bounded smooth domain in $\bfR^n$.
Suppose that $\sigma >1$  and that $f$ satisfies
 conditions (\ref{eq1.85})-(\ref{eq1.150}) with $K = K_n^+$.
Then there exists a complete locally strictly convex spacelike hypersurface
$\Sigma$ in $\mathcal{H}^{n+1}$ satisfying (\ref{eq1.45})-(\ref{eq1.46})
with uniformly bounded principal curvatures
\begin{equation}
\label{eq1.270}
 |\kappa[\Sigma]| \leq C \;\; \mbox{on $\Sigma$}.
\end{equation}
Moreover, $\Sigma$ is the graph of an admissible solution $u \in C^\infty
(\Omega) \cap C^1 (\bar{\Omega})$ of the Dirichlet problem
(\ref{eq1.250})-(\ref{eq1.260}). Furthermore, $u^2 \in C^{\infty} (\Omega)\cap C^{1,1}(\ol{\Omega}) $
and
\begin{equation}
\label{eq1.280}
\begin{aligned}
&\, u|D^2 u| \leq C
\;\;\; \mbox{in $\Omega$}, \\
&\, \sqrt{1 -|Du|^2} = \frac{1}{\sigma}
\;\;\; \mbox{on $\partial \Omega$}
\end{aligned}
\end{equation}
\end{theorem}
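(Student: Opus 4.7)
My plan is to solve the reduced Dirichlet problem \eqref{eq1.250}--\eqref{eq1.260} by continuity and approximation, following the strategy used for hyperbolic space in \cite{GS11}. Since the equation degenerates at $u=0$, I replace the boundary condition by $u=\mu$ on $\partial\Omega$ for small $\mu>0$; the flat graph $u\equiv\mu$ parametrizes a piece of the umbilic hypersurface $L_\mu$ and, by \eqref{eq1.248}, has $\kappa\equiv{\bf 1}$, so it solves \eqref{eq1.250} at $\sigma=1$. I then run the continuity method in $\sigma\in[1,\sigma_0]$ with this as starting point: openness is a standard application of the implicit function theorem using ellipticity on admissible graphs \cite{CNS3}, and closedness reduces to a priori $C^{2,\alpha}(\overline{\Omega})$ estimates uniform in $\sigma\in[1,\sigma_0]$ for each fixed $\mu>0$.

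The a priori estimates proceed in the usual order. The $C^0$ bound comes from sandwiching the solution between pieces of umbilic hypersurfaces and rescaled hyperbolic spheres tangent to $\partial\Omega$ from outside. The boundary gradient estimate is obtained by comparing with a tilted umbilic barrier, yielding $\sqrt{1-|Du|^2}\geq 1/\sigma$ on $\partial\Omega$; the global estimate $|Du|<1$ then follows from the strict spacelike condition and the maximum principle. Interior $C^2$ is derived using a test function of the form $W=\log\lambda_{\max}(A)+\Phi(u,|Du|^2)$ combined with the trace bounds \eqref{eq1.170} and \eqref{eq1.180}; note that \eqref{eq1.180} is exactly where the dual concavity assumption \eqref{eq1.110} enters, while third-order cancellations are handled by the concavity \eqref{eq1.100}.

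The main obstacle is the boundary $C^2$ estimate, since the factor $1/u$ inside $A[u]$ becomes large as $\mu\to 0$ and standard barrier constructions degenerate. Mixed tangential-normal estimates are obtained by differentiating the equation and constructing barriers adapted to the half-space geometry; the pure double-normal estimate, the hardest piece, exploits the homogeneity \eqref{eq1.140} together with the technical assumption \eqref{eq1.150} and requires a careful choice of subsolution reflecting the asymptotics of admissible solutions near $\partial\Omega$. Once all $C^2$ bounds are in hand, Evans--Krylov gives $C^{2,\alpha}$ and Schauder gives higher regularity.

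Finally, I take $\mu\to 0$. The correct scaling is the $\mu$-uniform interior bound $u|D^2u|\leq C$: passing to $v=u^2$ converts \eqref{eq1.250} into a nondegenerate problem up to $\partial\Omega$, giving $v\in C^{1,1}(\overline{\Omega})$. Reading off the leading asymptotics of \eqref{eq1.250} as $u\to 0^+$ (using $a_{ij}=w^{-1}\delta_{ij}+O(u)$ from \eqref{eq1.248} and homogeneity of $f$) forces $f(\kappa)\to 1/w$, and therefore $\sqrt{1-|Du|^2}=1/\sigma$ on $\partial\Omega$, completing \eqref{eq1.270}--\eqref{eq1.280}.
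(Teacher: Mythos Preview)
Your overall architecture---approximate boundary data $u=\mu$, continuity in $\sigma$, a priori estimates in the order $C^0$, $C^1$, boundary $C^2$, global $C^2$, then Evans--Krylov and $\mu\to 0$---matches the paper's. The boundary gradient and boundary Hessian steps are essentially as in the paper: the paper's Theorem~\ref{th3.1} proves a maximum principle for $(\sigma-\nu^{n+1})/u$ together with hyperboloid barriers (Lemma~\ref{lem3.0}) to get $\nu^{n+1}\to\sigma$ on $\partial\Omega$, and Theorem~\ref{th4.0} handles the boundary $C^2$ using the barrier $v=u-\epsilon+td-Nd^2$ for mixed derivatives and the asymptotic hypothesis \eqref{eq1.150} for $u_{nn}$, just as you outline.

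The substantive divergence is your interior/global curvature estimate. You propose a direct Pogorelov-type argument with a test function $W=\log\lambda_{\max}(A)+\Phi(u,|Du|^2)$, using \eqref{eq1.170}, \eqref{eq1.180}, and concavity of $F$. The paper does \emph{not} do this: instead it invokes the duality of Section~\ref{sec1.5} (Corollary~\ref{cor2.1}) to send the graph $S$ to its Gauss image $S^*\subset\bfH^{n+1}$, where $f^*(\kappa^*)=\sigma^{-1}$ with $\kappa_i^*=\kappa_i^{-1}$, and then applies the already-established global curvature maximum principle of \cite[Theorem~4.1]{GS11} on $S^*$. The authors explicitly remark at the end of Section~\ref{gs-I} that this detour through duality ``seems to be necessary since $F(A)$ is a concave function of $A$ but $G$ is a convex function of $\{u_{ij}\}$.'' In other words, they tried the direct approach you sketch and could not close it; the Lorentzian sign in \eqref{eq1.248} (the factor $-u\gamma^{ik}u_{kl}\gamma^{lj}$) flips the convexity of the PDE in $D^2u$, and the resulting terms in the twice-differentiated equation do not cooperate the way they do in $\bfH^{n+1}$.

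So your step ``Interior $C^2$ is derived using a test function \ldots'' is the gap. Unless you can exhibit the computation and show that \eqref{eq1.180} together with a clever choice of $\Phi$ actually absorbs the bad-sign terms in de Sitter space, you should replace this step by the duality argument: bound $\kappa_i$ on $\partial\Omega$ from Theorem~\ref{th4.0}, transfer to $S^*$ via the Legendre/Gauss map, apply \cite[Theorem~4.1]{GS11} (which needs only the concavity of $f^*$, i.e.\ hypothesis \eqref{eq1.110}), and transfer back. This is precisely what hypothesis \eqref{eq1.110} is for; the inequality \eqref{eq1.180} that you cite is only its shadow and is used in the paper for the gradient estimate \eqref{eq3.20}, not for curvature.
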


As a concrete application we have existence for the canonical curvature functions.
\begin{corollary}\label{cor1.1}Let $\Gamma = \partial \Omega \times \{0\} \subset \partial_{\infty}\mathcal{H}^{n+1}$
where $\Omega$ is a bounded smooth domain in $\bfR^n$.
Then there exists a complete locally strictly convex spacelike hypersurface
$\Sigma$ in $\mathcal{H}^{n+1}$ satisfying 
\[ (\sigma_n/\sigma_l)^{\frac{1}{n-l}}=\sigma>1, \hspace{.1in}0 \leq l <  n\]
with $\partial \Sigma=\Gamma$ and uniformly bounded principal curvatures
\begin{equation}
 |\kappa[\Sigma]| \leq C \;\; \mbox{on $\Sigma$}.
\end{equation}
Moreover, $\Sigma$ is the graph of an admissible solution $u \in C^\infty
(\Omega) \cap C^1 (\bar{\Omega})$ of the Dirichlet problem
(\ref{eq1.250})-(\ref{eq1.260}). Furthermore, $u^2 \in C^{\infty} (\Omega)\cap C^{1,1}(\ol{\Omega}) $
and
\begin{equation}
\begin{aligned}
&\, u|D^2 u| \leq C
\;\;\; \mbox{in $\Omega$}, \\
&\, \sqrt{1 -|Du|^2} = \frac{1}{\sigma}
\;\;\; \mbox{on $\partial \Omega$}
\end{aligned}
\end{equation}
\end{corollary}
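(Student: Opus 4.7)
The strategy is to reduce Corollary \ref{cor1.1} directly to Theorem \ref{th1} by verifying that $f=(\sigma_n/\sigma_l)^{1/(n-l)}$ satisfies every structural hypothesis (\ref{eq1.85})--(\ref{eq1.150}) on $K=K_n^+$. Most conditions are elementary, so the plan reduces to a brief audit of the hypotheses, with the only nontrivial points being the two concavity axioms and the identification of the dual function $f^{*}$.

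The axioms (\ref{eq1.85}), (\ref{eq1.90}), (\ref{eq1.120}), (\ref{eq1.130}), and (\ref{eq1.140}) are immediate: with the normalization convention of the excerpt, $\sigma_n(\lambda)=\lambda_1\cdots\lambda_n$ vanishes on $\partial K_n^+$ while $\sigma_l$ remains strictly positive there; both polynomials are symmetric; and raising the ratio to the power $1/(n-l)$ produces a smooth function that is positively homogeneous of degree one, strictly increasing in each variable on $K_n^+$, and equal to $1$ at $\mathbf{1}$.

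The concavity hypothesis (\ref{eq1.100}) is the classical concavity of the normalized curvature quotient $(\sigma_n/\sigma_l)^{1/(n-l)}$ on $\Gamma_n=K_n^+$. For the dual hypothesis (\ref{eq1.110}), the first step is to identify $f^{*}$ explicitly. Using the normalized identities $\sigma_n(1/\lambda)=1/\sigma_n(\lambda)$ and $\sigma_l(1/\lambda)=\sigma_{n-l}(\lambda)/\sigma_n(\lambda)$ (the latter coming from $e_l(1/\lambda)=e_{n-l}(\lambda)/e_n(\lambda)$ for the unnormalized elementary symmetric polynomials together with $\binom{n}{l}=\binom{n}{n-l}$), the ratio simplifies to $\sigma_n(1/\lambda)/\sigma_l(1/\lambda)=1/\sigma_{n-l}(\lambda)$, so
\[
f^{*}(\lambda)=\sigma_{n-l}(\lambda)^{1/(n-l)}.
\]
Concavity of $f^{*}$ on $K_n^+$ is then the classical concavity of $\sigma_k^{1/k}$ on the G\aa rding cone $\Gamma_k\supset K_n^+$ with $k=n-l$.

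Finally the asymptotic growth condition (\ref{eq1.150}) has already been computed in the introduction of the paper: the limit as $R\to\infty$ equals $(n/l)^{1/(n-l)}$, which is strictly greater than $1$ for every $l<n$ (interpreted as $+\infty$ when $l=0$), so the required lower bound $1+\varepsilon_0$ holds uniformly on some $B_{\delta_0}(\mathbf{1})$. With every hypothesis verified, Theorem \ref{th1} applies to produce the existence, regularity, and uniform principal curvature bound claimed in Corollary \ref{cor1.1}. The only mildly subtle point is the identification of $f^{*}$ with a power of $\sigma_{n-l}$; everything else is a routine check, and no genuine obstacle is encountered.
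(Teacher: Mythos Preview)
Your proposal is correct and matches the paper's own approach: the paper presents Corollary~\ref{cor1.1} as an immediate application of Theorem~\ref{th1}, having already noted in the introduction that for $f=(\sigma_n/\sigma_l)^{1/(n-l)}$ one has $f^*=(\sigma_{n-l})^{1/(n-l)}$ and $\lim_{R\to\infty}f(\lambda_1,\dots,\lambda_{n-1},\lambda_n+R)=(n/l)^{1/(n-l)}>1$. Your verification of the remaining hypotheses (\ref{eq1.85})--(\ref{eq1.150}) is more explicit than the paper's, but the logic is identical.
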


As we mentioned earlier, in Section  \ref{sec1.5}  we prove strong duality theorems
(see Theorem \ref{th2.1} and Corollary \ref{cor2.1}  ) which allows us to transfer our existence results for $\bfH^{n+1}$
in \cite{GS11} to $\mathcal{H}^{n+1}$ and conversely. In particular we have

\begin{corollary}\label{cor1.2}Let $\Gamma = \partial \Omega \times \{0\} \subset \partial_{\infty}\mathcal{H}^{n+1}$
where $\Omega$ is a bounded smooth domain in $\bfR^n$.
Then there exists a complete locally strictly convex spacelike hypersurface
$\Sigma$ in $\mathcal{H}^{n+1}$ satisfying 
\[ (\sigma_l)^{\frac1l}=\sigma>1, \hspace{.1in} 1 \leq l \leq  n\] 
with $\partial \Sigma=\Gamma$ and uniformly bounded principal curvatures
\begin{equation}
 |\kappa[\Sigma]| \leq C \;\; \mbox{on $\Sigma$}.
\end{equation}
Moreover, $\Sigma$ is the graph of an admissible solution $u \in C^\infty
(\Omega) \cap C^1 (\bar{\Omega})$ of the Dirichlet problem
(\ref{eq1.250})-(\ref{eq1.260}). Furthermore, $u^2 \in C^{\infty} (\Omega)\cap C^{1,1}(\ol{\Omega}) $
and
\begin{equation}
\begin{aligned}
&\, u|D^2 u| \leq C
\;\;\; \mbox{in $\Omega$}, \\
&\, \sqrt{1 -|Du|^2} = \frac{1}{\sigma}
\;\;\; \mbox{on $\partial \Omega$}
\end{aligned}
\end{equation}
Further, if  $l=1$ or $l=2$ (mean curvature and normalized scalar curvature) we have uniqueness among  convex solutions and even among all solutions convex or not
if {\em $\Omega$ is simple.} 
\end{corollary}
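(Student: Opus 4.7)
The strategy is to derive Corollary \ref{cor1.2} in the range $1\le l<n$ from the Guan--Spruck existence theorem in hyperbolic space \cite{GS11} via the Gauss-map duality of Theorem \ref{th2.1} and Corollary \ref{cor2.1}, and to pick up the Gauss-curvature case $l=n$ directly from Corollary \ref{cor1.1} (its index value $0$ gives $\sigma_n^{1/n}=\sigma$). One cannot apply Theorem \ref{th1} to $f(\kappa)=\sigma_l(\kappa)^{1/l}$ directly when $l<n$ because $f$ fails condition \eqref{eq1.120} on $\partial K_n^+$. However, the identity $\sigma_k(1/\lambda_1,\dots,1/\lambda_n)=\sigma_{n-k}(\lambda)/\sigma_n(\lambda)$ gives
\[
f^*(\lambda)=\bigl(\sigma_n(\lambda)/\sigma_{n-l}(\lambda)\bigr)^{1/l},
\]
which for $1\le l<n$ is precisely the type of curvature quotient for which \cite{GS11} proves an asymptotic Plateau existence theorem in $\bfH^{n+1}$.

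Given $\sigma>1$, I would invoke \cite{GS11} with right-hand side $1/\sigma\in(0,1)$ to obtain a complete locally strictly convex hypersurface $\Sigma^*\subset\bfH^{n+1}$ with $\partial_\infty\Sigma^*=\Gamma$ satisfying $f^*(\kappa[\Sigma^*])=1/\sigma$, together with uniformly bounded principal curvatures, the corresponding $C^{1,1}$ boundary regularity, and the analogous boundary normalization. Corollary \ref{cor2.1} then identifies the inverse Gauss image of $\Sigma^*$ with a complete locally strictly convex spacelike hypersurface $\Sigma\subset\mathcal{H}^{n+1}$ whose principal curvatures satisfy $\kappa_i[\Sigma]=1/\kappa_i[\Sigma^*]$; hence
\[
\sigma_l(\kappa[\Sigma])^{1/l}=\bigl(f^*(\kappa[\Sigma^*])\bigr)^{-1}=\sigma,
\]
and the uniform curvature bound, the $C^{1,1}$ regularity of $u^2$, and the boundary gradient identity $\sqrt{1-|Du|^2}|_{\partial\Omega}=1/\sigma$ on the de Sitter side all transfer from the corresponding hyperbolic statements via the smooth Gauss-map correspondence.

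The delicate point I would verify with most care is that the Gauss-map duality really preserves the asymptotic Plateau data at infinity: two-sided uniform bounds on $\kappa[\Sigma^*]$ (away from both $0$ and $\infty$) are what make $\kappa_i[\Sigma]=1/\kappa_i[\Sigma^*]$ bounded and keep $\Sigma$ strictly spacelike with compact asymptotic boundary exactly $\Gamma$, and the normalization $\sqrt{1-|Du|^2}=1/\sigma$ at $\partial\Omega$ corresponds under inversion of the principal curvatures to the natural boundary normalization on the hyperbolic side. This is precisely the content that Theorem \ref{th2.1} and Corollary \ref{cor2.1} are designed to package.

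For the uniqueness statement when $l=1$ or $l=2$ and $\Omega$ is simple, the plan is again to transfer the corresponding uniqueness from \cite{GS11} through the same duality. In hyperbolic space, the mean-curvature ($l=1$) and scalar-curvature ($l=2$) equations admit Alexandrov/moving-plane or maximum-principle comparison arguments giving uniqueness among convex solutions (and among all solutions when $\Omega$ is simple), thanks to the quasilinear structure in the first case and the structural simplicity of the $\sigma_2$ operator in the second. Since Corollary \ref{cor2.1} is a bijection between convex spacelike solutions in $\mathcal{H}^{n+1}$ and convex solutions in $\bfH^{n+1}$ with the same asymptotic boundary, uniqueness on the hyperbolic side yields uniqueness on the de Sitter side.
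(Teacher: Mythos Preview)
Your existence argument is essentially the paper's: transfer the hyperbolic existence from \cite{GS11} for the dual function $f^*=(\sigma_n/\sigma_{n-l})^{1/l}$ via Theorem~\ref{th2.1} and Corollary~\ref{cor2.1}, and pick up $l=n$ either by duality again or from Corollary~\ref{cor1.1}. That part is fine.

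The uniqueness paragraph, however, has a real gap. First, you misidentify the dual problem: the Gauss-map dual of de Sitter $\sigma_l^{1/l}=\sigma$ is the hyperbolic equation $(\sigma_n/\sigma_{n-l})^{1/l}=\sigma^{-1}$, \emph{not} hyperbolic $\sigma_l^{1/l}$. So the ``quasilinear structure'' of hyperbolic mean curvature and the ``structural simplicity'' of hyperbolic $\sigma_2$ are irrelevant here; what you would need is a uniqueness theorem for the hyperbolic curvature quotients $\sigma_n/\sigma_{n-1}$ and $(\sigma_n/\sigma_{n-2})^{1/2}$. Second, and more seriously, the Gauss-map correspondence of Corollary~\ref{cor2.1} is a bijection only between \emph{locally strictly convex} hypersurfaces on the two sides. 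Hence even a perfect hyperbolic uniqueness statement transfers at best to uniqueness among convex de Sitter solutions; it cannot by itself yield the stronger assertion ``uniqueness among all solutions, convex or not, when $\Omega$ is simple,'' because non-convex de Sitter solutions have no dual to compare with. The paper accordingly does not rely on duality alone: it invokes the uniqueness Theorem~1.6 of \cite{GS11} together with a continuity and deformation argument of the type in \cite{RS94}. That extra ingredient is what closes the gap between ``unique among convex'' and ``unique among all'' in the simple-domain case, and it is missing from your outline.
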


The uniqueness part of Corollary \ref{cor1.2} follows from the uniqueness Theorem 1.6 of \cite{GS11} and a continuity
and deformation argument like that used in \cite{RS94}. Note that Montiel \cite{Montiel1} proved existence for 
$H=\sigma>1$ assuming $\partial \Omega$ is mean convex. Our result shows that for arbitrary $\Omega$ there is always a locally strictly convex solution. If $\Omega$ is simple and mean convex the solutions constructed by Montiel must agree with the ones we construct.\\

Transferring by duality the results of Corollary \ref{cor1.1} to $\bfH^{n+1}$ gives the mildly surprising
\begin{corollary}\label{cor1.3}Let $\Gamma = \partial \Omega \times \{0\} \subset \partial_{\infty} \bfH^{n+1}$
where $\Omega$ is a bounded smooth domain in $\bfR^n$.
Then there exists a complete locally strictly convex  hypersurface
$\Sigma$ in $\bfH^{n+1}$ satisfying 
\[ (\sigma_l)^{\frac1l}=\sigma^{-1} \in (0,1), \hspace{.1in} 1 \leq l < n\] 
with $\partial \Sigma =\Gamma$ and uniformly bounded principal curvatures
\begin{equation}
 |\kappa[\Sigma]| \leq C \;\; \mbox{on $\Sigma$}.
\end{equation}
Moreover, $\Sigma$ is the graph of an admissible solution $v \in C^\infty
(\Omega) \cap C^1 (\bar{\Omega})$ of the Dirichlet problem dual to
(\ref{eq1.250})-(\ref{eq1.260}). Furthermore, $v^2 \in C^{\infty} (\Omega)\cap C^{1,1}(\ol{\Omega}) $
and
\begin{equation}
\begin{aligned}
&\, v|D^2 v| \leq C
\;\;\; \mbox{in $\Omega$}, \\
&\, \frac1{\sqrt{1 +|Dv|^2}} = \frac{1}{\sigma}
\;\;\; \mbox{on $\partial \Omega$}
\end{aligned}
\end{equation}
\end{corollary}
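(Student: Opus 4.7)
\textbf{Proof plan for Corollary~\ref{cor1.3}.} The plan is to apply the duality machinery promised in Section~\ref{sec1.5} (Theorem~\ref{th2.1} and Corollary~\ref{cor2.1}) to transport the de Sitter existence result of Corollary~\ref{cor1.1} to hyperbolic space. First, given $1 \le l < n$, I set $l' = n - l$, so $1 \le l' \le n-1 < n$. Since the curvature function $f = (\sigma_n/\sigma_{l'})^{1/(n-l')}$ satisfies the structural hypotheses \eqref{eq1.85}--\eqref{eq1.150} in $K_n^+$, Corollary~\ref{cor1.1} applies and produces a complete locally strictly convex spacelike hypersurface $\Sigma \subset \mathcal{H}^{n+1}$ with $(\sigma_n/\sigma_{l'})^{1/(n-l')}(\kappa[\Sigma]) = \sigma > 1$, with $\partial\Sigma = \Gamma$, and with all the regularity and estimates asserted there.

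Next, I invoke the duality theorem to pass from $\Sigma \subset \mathcal{H}^{n+1}$ to its Gauss image $\Sigma^* \subset \bfH^{n+1}$. By Theorem~\ref{th2.1}/Corollary~\ref{cor2.1}, $\Sigma^*$ is a complete locally strictly convex hypersurface in $\bfH^{n+1}$ with the same asymptotic boundary $\Gamma$ at infinity and with principal curvatures $\tilde{\kappa}_i = 1/\kappa_i$. The purely algebraic identity
\[
\sigma_{n-l'}\!\bigl(1/\kappa_1,\dots,1/\kappa_n\bigr)
 = \binom{n}{l'}^{-1}\frac{e_{l'}(\kappa)}{e_n(\kappa)}
 = \frac{\sigma_{l'}(\kappa)}{\sigma_n(\kappa)},
\]
raised to the $1/(n-l')$ power and combined with the equation on $\Sigma$, gives
\[
\sigma_{l}^{1/l}(\tilde{\kappa}[\Sigma^*])
 = \sigma_{n-l'}^{1/(n-l')}(\tilde{\kappa})
 = \Bigl(\tfrac{\sigma_n}{\sigma_{l'}}\Bigr)^{-1/(n-l')}(\kappa[\Sigma])
 = \frac{1}{\sigma},
\]
which is precisely the required hyperbolic equation. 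The uniform curvature bound $|\kappa[\Sigma]| \le C$ together with the positive lower bound $\kappa_i \ge c > 0$ (forced by $\sigma_n/\sigma_{l'} = \sigma^{n-l'} > 0$ on $K_n^+$) yields $|\tilde{\kappa}[\Sigma^*]| \le C'$.

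The remaining task is to transfer the graph representation and the boundary gradient condition. Writing $\Sigma^*$ as the graph of a function $v$ over $\Omega$ in the half-space model of $\bfH^{n+1}$, the duality identifies the asymptotic boundaries as Euclidean subsets of $\bfR^n$, so $\partial \Sigma^* = \partial\Omega \times \{0\} = \Gamma$. The regularity statements $v \in C^\infty(\Omega)\cap C^1(\bar\Omega)$, $v^2 \in C^\infty(\Omega)\cap C^{1,1}(\bar\Omega)$, and $v|D^2 v| \le C$ follow directly from the corresponding estimates for $u$ on $\Sigma$ via the explicit formula for the Gauss map in the half-space coordinates established in Section~\ref{sec1.5}, together with the fact that under this map a spacelike de Sitter graph of $u$ with $|Du|<1$ corresponds to a hyperbolic graph of $v$ with arbitrary $|Dv|$, where the tangent spaces are related by the polarity. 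Finally, the boundary identity $\sqrt{1-|Du|^2} = 1/\sigma$ translates to $1/\sqrt{1+|Dv|^2} = 1/\sigma$ on $\partial\Omega$ because both quantities measure (on the respective sides of the duality) the cosine of the angle between the outward normal and the vertical direction in the half-space model, and the Gauss map exchanges these two angles.

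\textbf{Main obstacle.} The genuine content is entirely in the duality theorem of Section~\ref{sec1.5}: one must know that the Gauss map extends continuously to the asymptotic boundary and sends it identically to the same compact set $\Gamma \subset \bfR^n$, and that the extended map carries the $C^1(\bar\Omega)$ graph structure with boundary gradient condition $\sqrt{1-|Du|^2}=1/\sigma$ to a $C^1(\bar\Omega)$ hyperbolic graph with $1/\sqrt{1+|Dv|^2}=1/\sigma$. Once Theorem~\ref{th2.1} and Corollary~\ref{cor2.1} are in hand, the proof of Corollary~\ref{cor1.3} is a pure bookkeeping translation of Corollary~\ref{cor1.1}; I would not expect to carry out any further PDE estimates.
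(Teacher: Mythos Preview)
Your proposal is correct and follows exactly the route the paper intends: Corollary~\ref{cor1.3} is obtained by applying the duality of Theorem~\ref{th2.1}/Corollary~\ref{cor2.1} to the de Sitter hypersurface produced by Corollary~\ref{cor1.1} for $f=(\sigma_n/\sigma_{n-l})^{1/l}$, whose dual function is $f^*=(\sigma_l)^{1/l}$. Your identification of the boundary gradient condition via Lemma~\ref{lemma2.1} (namely $\sqrt{1+|\nabla v|^2}=(1-|\nabla u|^2)^{-1/2}$) and of the curvature bounds via $\kappa_i^*=\kappa_i^{-1}$ is exactly what the paper uses, and no further PDE work is required.
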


Equation (\ref{eq1.250}) is
singular where $u = 0$. It is therefore natural to approximate the
boundary condition (\ref{eq1.260}) by
\begin{equation}
\label{eq1.290}
             u = \epsilon > 0 \;\;\;  \text{on $\partial \Omega$}.
\end{equation}
When $\epsilon$ is sufficiently small, we shall show that the Dirichlet problem
(\ref{eq1.250}),(\ref{eq1.290}) is solvable for all $\sigma>1$.

\begin{theorem}
\label{th2}
Let $\Omega$ be a bounded smooth domain in $\bfR^n$
and $\sigma>1$. Suppose $f$ satisfies
 conditions (\ref{eq1.85})-(\ref{eq1.150}) with $K = K_n^+$. Then for any
$\epsilon > 0$ sufficiently small, there exists an admissible
solution $u^{\epsilon} \in C^\infty (\bar{\Omega})$ of the Dirichlet
problem (\ref{eq1.250}),(\ref{eq1.290}) . Moreover, $u^{\epsilon}$
satisfies the {\em a priori} estimates
\begin{equation}
\label{eq1.300}
\sqrt{1 - |D u^{\epsilon}|^2} = \frac{1}{\sigma} + O(\epsilon)
\;\;\; \mbox{on $\partial \Omega$}
\end{equation}
and
\begin{equation}
\label{eq1.310}
u^{\epsilon}|D^2 u^{\epsilon}| \leq C
\;\;\; \mbox{in $\Omega$}
\end{equation}
where $C$ is independent of $\epsilon$.
\end{theorem}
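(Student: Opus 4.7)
The plan is to establish existence for each sufficiently small $\epsilon>0$ by the standard continuity method, while deriving a priori $C^{2,\alpha}$ bounds whose key estimates are independent of $\epsilon$ (so they can be passed to the limit in Theorem~\ref{th1}). I would introduce a one-parameter family of admissible Dirichlet problems connecting a known solvable starting point to (\ref{eq1.250}), (\ref{eq1.290}); openness follows from the implicit function theorem applied to the linearized operator, which is elliptic on admissible solutions by (\ref{eq1.245}) and \cite{CNS3}, while closedness reduces to the a~priori estimates. Thus the heart of the argument is the derivation of those estimates.

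For the $C^0$ bound, comparison with the umbilic hypersurfaces $L_\tau$ (which correspond to Euclidean hemispheres in the half space model) together with (\ref{eq1.245}) yields $\epsilon\leq u^\epsilon \leq C$, with $C$ depending only on $\sigma$ and $\diam\Omega$. The gradient bound on $\partial\Omega$ giving the precise identity (\ref{eq1.300}) would come from upper and lower barriers built from constant de~Sitter curvature $\sigma$ hypersurfaces tangent to $\partial\Omega\times\{\epsilon\}$, using (\ref{eq1.240}) to convert between Minkowski and de~Sitter principal curvatures; propagation of $|Du^\epsilon|<1$ inward is then done via the maximum principle applied to an auxiliary function of the form $\log(1-|Du|^2)+\phi(u)$, with $\phi$ chosen using (\ref{eq1.100}), (\ref{eq1.140}), and the crucial nondegeneracy condition (\ref{eq1.150}).

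The central estimate is the curvature bound (\ref{eq1.310}), which in view of (\ref{eq1.240}) is equivalent to a uniform bound on the de~Sitter principal curvatures. I would test at an interior maximum of an auxiliary function of the form $M(x)=\max_i \log\kappa_i + \phi(u,w)$, differentiate $F(A)=\sigma$ twice, and use concavity (\ref{eq1.100}) to extract the familiar good term, together with the dual concavity (\ref{eq1.110}) and the lower bound $\sum \kappa_i^2 f_i \geq \sigma^2$ from (\ref{eq1.180}) to absorb the bad third derivative terms generated by the explicit $u$ appearing in (\ref{eq1.248}). At the boundary, the double tangential second derivatives are obtained by differentiating (\ref{eq1.290}) along $\partial\Omega$, the mixed tangent--normal derivatives by an Ivochkina--Trudinger type barrier supported in a neighborhood of $\partial\Omega$ using the tangential operator and the distance function, and the pure normal second derivative algebraically from a quadratic relation which remains nondegenerate thanks to (\ref{eq1.150}).

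The hardest step will be the global curvature estimate, whose bad terms come from differentiating the explicit $u$ factor in $a_{ij}[u]=(\delta_{ij}-u\gamma^{ik}u_{kl}\gamma^{lj})/w$; this is exactly the obstacle overcome in hyperbolic space in \cite{GS11}, and I expect the substitution $\kappa_i=u\tkappa_i+1/w$ from (\ref{eq1.240}) to play the analogous role here, with some sign changes reflecting the time-reversing character of the isometry $\phi$ noted after (\ref{eq1.70}). Once the estimates (\ref{eq1.300})--(\ref{eq1.310}) are in place with constants independent of $\epsilon$, Evans--Krylov and Schauder bootstrap yield $u^\epsilon\in C^\infty(\bar\Omega)$ for each fixed small $\epsilon$, closing the continuity method and completing the proof.
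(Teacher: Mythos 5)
Your outline of the continuity method, the gradient bound, and the boundary second--derivative estimates tracks the paper's strategy (and the references \cite{GSS},\cite{GS11} that it explicitly follows) at the right level of detail; in particular, the use of constant--curvature hyperboloid barriers to obtain the asymptotic angle \eqref{eq1.300}, the Caffarelli--Nirenberg--Spruck tangential operator for the mixed second derivatives, and \eqref{eq1.150} to pin down the pure normal second derivative all correspond to Section~\ref{sec3} and Section~\ref{sec4} of the paper. (The paper actually derives the gradient bound from the differential inequality $F^{ij}\nabla_{ij}\bigl((\sigma-\nu^{n+1})/u\bigr)\leq 0$ rather than a test function of the form $\log(1-|Du|^2)+\phi(u)$, but that is a cosmetic difference.)

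There is, however, a genuine gap at exactly the point you flag as ``the hardest step.'' You propose obtaining the global curvature bound \eqref{eq1.310} by an interior maximum principle argument on $\max_i\log\kappa_i+\phi(u,w)$, differentiating $F(A[u])=\sigma$ twice and invoking concavity of $F$, expecting the same mechanism as in \cite{GS11} to work ``with some sign changes.'' But those sign changes are precisely what the paper identifies as the obstruction: because of the minus sign in $a_{ij}[u]=\frac{1}{w}\bigl(\delta_{ij}-u\gamma^{ik}u_{kl}\gamma^{lj}\bigr)$, the operator $G(D^2u,Du,u)=F(A[u])$ is \emph{convex} rather than concave in $D^2 u$, so the term $F^{ij,kl}\,(\cdot)(\cdot)$ produced by differentiating the equation twice has the wrong sign to be discarded, and the Gauss/Codazzi commutation in the Lorentzian ambient also flips sign relative to the hyperbolic setting. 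The paper explicitly says the use of duality ``seems to be necessary'' for exactly this reason. Its Section~\ref{sec6} instead applies Corollary~\ref{cor2.1}: the Gauss map/Legendre transform sends the solution graph $S\subset\mathcal H^{n+1}$ to a locally strictly convex graph $S^*\subset\bfH^{n+1}$ with $f^*(\kappa^*)=\sigma^{-1}$ and $\kappa^*_i=\kappa_i^{-1}$; the boundary curvature bounds \eqref{eq6.10} transfer to $S^*$, the global curvature maximum principle of \cite{GS11} (Theorem~4.1 there) applies in hyperbolic space where the equation \emph{is} concave, and transferring back by duality yields \eqref{eq6.70}. Your proposal omits this detour, and a direct maximum principle argument as sketched would not close: you would need either to make the dual step explicit, or to supply a new argument overcoming the convexity of $G$ that the authors state they could not find.
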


In the proof of Theorem \ref{th2} we mostly follow the method of \cite{GSS}, \cite{GS11} except that we will appeal to the 
duality results of Section \ref{sec1.5} to use the global maximal principle of \cite{GS11} to control the principal curvatures and prove the first inequality in \eqref{eq1.280}. Because there is no a priori uniqueness (i.e $G_u \geq 0$ need not hold)  it is not sufficient to derive  estimates just for solutions of (\ref{eq1.250}) with constant right hand side $\sigma$,  one must also consider perturbations. {\em However to avoid undue length and tedious repetition, we will prove the estimates for solutions of constant curvature $\sigma$ as the necessary modifications are straightforward} (see \cite{GSS}).\\

By Theorem \ref{th2}, the hyperbolic  principal curvatures of the
admissible solution $u^{\epsilon}$ of the Dirichlet
problem (\ref{eq1.250}),(\ref{eq1.290}) are
uniformly bounded above independent of $\epsilon$.
Since $f(\kappa[u^{\epsilon}])=\sigma$ and $f=0$ on $\partial K_n^+$,
the hyperbolic principal curvatures admit a uniform positive lower bound
independent of $\epsilon$ and therefore (\ref{eq1.250}) is uniformly
elliptic on compact subsets of $\Omega$ for the solution $u^{\epsilon}$.
By the interior estimates of Evans and Krylov, we obtain uniform
$C^{2,\alpha}$ estimates for any compact subdomain of $\Omega$.
The proof of Theorem \ref{th1} is now routine. \\

An outline of the contents of the paper are as follows. Section \ref{sec1.5} contains the important duality results, 
Theorem \ref{th2.1} and Corollary \ref{cor2.1}. Section \ref{sec2} contains preliminary formulas and computations that are used in Section \ref{sec3} to prove the asymptotic angle result Theorem \ref{th3.1}. In Section \ref{sec4} we use
the linearized operator to bound the principal curvatures of a solution on the boundary. Here is where the condition 
\eqref{eq1.150} comes into play. Finally in Section \ref{sec6} we use duality to establish global curvature bounds and complete the proof of Theorem \ref{th2}. The use of duality to prove this global estimate is unusual but seems to be necessary since $F(A)$ is a concave function of A but G is a convex function of $\{u_{ij}\}$.

\section{The Gauss map and Legendre transform}
\label{sec1.5}
\setcounter{equation}{0}

Let $\psi: \Sigma^n \goto \mathcal{H}^{n+1}$ be a strictly locally convex spacelike immersion  with prescribed (compact)  boundary $\Gamma$ either in the timeslice $L_{\tau}$ or in $L_{\infty}=\partial_{\infty} \mathcal{H}$. We are constructing such $\Sigma^n$ as the graph of an admissible
function u :
\[ S=\{(x,u(x)) \in \bfR^{n+1}_+ : u \in C^{\infty}(\ol{\Omega}), u(x)>0,\, \, |\nabla u(x)| <1\}~,\]
where $\Omega$ is a smooth bounded domain in $\bfR^n$. We know that the Gauss map 
\[\vn: \Sigma^n \goto \bfH^{n+1}\]
 takes values in hyperbolic space.  Using the map $\phi$  defined in \eqref{eq1.50} that was used to identify the de Sitter and upper halfspace models of the steady state space $\mathcal{H}^{n+1}$,
 Montiel \cite{Montiel1} showed that if we use the  upper halfspace representation for both $\mathcal{H}$ and $\bfH^{n+1}$, then the Gauss map $\vn$ corresponds to the map 
 \[ L : S \goto \bfH^{n+1}\]
 defined by
 \be \label{eq2.10}
  L((x,u(x))=(x-u(x)\nabla u(x), u(x)\sqrt{1-|\nabla u|^2}) \hspace{.1in} x\in \Omega ~.
  \ee
 We now identify the map $L$ in terms of a hodograph and associated Legendre transform. Define the map $y=\nabla p(x): \Omega \subset \bfR^n \goto \bfR^n$ by
 \be \label {eq2.30}
 y =\nabla p(x),\, \, x\in \Omega \hspace{.1in} \mbox{where $p(x)= \frac12(x^2-u(x)^2)$}.
\ee
Note that $p$ is strictly convex in the Euclidean sense by \eqref{eq1.245} and hence the map $y$ is globally one to one. Therefore $v(y):=u(x)\sqrt{1-|\nabla u(x)|^2}$ is well defined in 
$\Omega^*:= y(\Omega)$. The associated Legendre transform is the function $q(y)$ defined in  $\Omega^*$
  by $ p(x)+q(y)= x \cdot y$ or $q(y)=-p(x)+x\cdot \nabla p(x)$.

\begin{lemma}\label{lemma2.1} The Legendre transform $q(y)$ is given by
\[q(y)=\frac12(y^2 +v(y)^2)\hspace{.1in} \mbox{where $v(y):=u(x)\sqrt{1-|\nabla u(x)|^2}$}~.\]
Moreover, $\sqrt{1+|\nabla v(y)|^2}=(1-|\nabla u|^2)^{-\frac12}$ and $u(x)=v(y)\sqrt{1+|\nabla v(y)|^2}$. Therefore $x=\nabla q(y),\, (q_{ij}(y))=(p_{ij}(x))^{-1}$ and the inverse map $L^*$ of L is given by $L^*(y, v(y))=(x,u(x))$.
\end{lemma}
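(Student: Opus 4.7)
The plan is to verify the four assertions in Lemma~\ref{lemma2.1} by direct computation from the definition of the Legendre transform together with the explicit formulas for $p$ and $y=\nabla p$, and then to read off the inverse map. Most of the work amounts to organized bookkeeping; there is no real analytic obstacle, since the strict convexity of $p$ in \eqref{eq1.245} guarantees that $y\mapsto x$ is a valid change of variables and that standard Legendre-transform identities apply.

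First I would establish the main identity $q(y)=\tfrac12(y^2+v(y)^2)$ by substitution. Since $y=\nabla p(x)=x-u\nabla u$, the definition $q(y)=x\cdot y-p(x)$ expands to
\[
q(y)=x\cdot(x-u\nabla u)-\tfrac12(x^2-u^2)=\tfrac12 x^2-u(x\cdot\nabla u)+\tfrac12 u^2.
\]
Comparing with $\tfrac12 y^2=\tfrac12 x^2-u(x\cdot\nabla u)+\tfrac12 u^2|\nabla u|^2$ gives $q(y)-\tfrac12 y^2=\tfrac12 u^2(1-|\nabla u|^2)=\tfrac12 v(y)^2$, which is the claimed formula. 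The standard Legendre identities $x=\nabla q(y)$ and $(q_{ij})=(p_{ij})^{-1}$ are automatic once we know $p$ is a smooth strictly convex potential.

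Next I would derive the gradient relation between $u$ and $v$. Differentiating $q(y)=\tfrac12(y^2+v^2)$ in $y$ gives $\nabla q(y)=y+v\nabla v$; but $\nabla q(y)=x$ and $y=x-u\nabla u$, so
\[
u(x)\nabla_x u(x)=v(y)\nabla_y v(y)
\]
as vectors in $\bfR^n$. Taking norms and inserting $v^2=u^2(1-|\nabla u|^2)$ yields $|\nabla v|^2=|\nabla u|^2/(1-|\nabla u|^2)$, hence $1+|\nabla v|^2=(1-|\nabla u|^2)^{-1}$, which is the second claim. The dual formula $u=v\sqrt{1+|\nabla v|^2}$ then falls out algebraically from $v=u\sqrt{1-|\nabla u|^2}$.

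Finally, for the statement about the inverse of $L$: combining $x=y+v\nabla v$ with $u=v\sqrt{1+|\nabla v|^2}$ shows that the map
\[
(y,v(y))\longmapsto (y+v\nabla v,\,v\sqrt{1+|\nabla v|^2})=(x,u(x))
\]
recovers $(x,u(x))$, which is exactly the content of $L^*\circ L=\mathrm{id}$. The only step that requires any care is to observe that the global invertibility of $y\mapsto x$ used throughout relies on the convexity hypothesis \eqref{eq1.245}; everything else is a direct computation, so I do not anticipate a genuine obstacle.
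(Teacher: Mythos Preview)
Your proof is correct and follows essentially the same route as the paper: both verify $q(y)=\tfrac12(y^2+v^2)$ by direct substitution of $y=x-u\nabla u$ and $p=\tfrac12(x^2-u^2)$, then invoke the standard Legendre identities $x=\nabla q$, $(q_{ij})=(p_{ij})^{-1}$, and from $x=y+v\nabla v$ together with $y=x-u\nabla u$ deduce $u\nabla u=v\nabla v$, whence the gradient and inverse-map formulas follow. The only cosmetic difference is that the paper verifies $p(x)+q(y)=x\cdot y$ whereas you compute $q(y)-\tfrac12 y^2$ directly; the content is identical.
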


\begin{proof} We calculate 
\begin{eqnarray*}
&p(x)+q(y)= \frac12(x^2-u(x)^2)+ \frac12(y^2 +v(y)^2)\\
&=\frac12(x^2-u(x)^2)+\frac12(x^2-2u(x) x\cdot \nabla u(x)+
u^2 |\nabla u|^2)+\frac12(u^2(1-|\nabla u(x)|^2)\\
&=x^2-u(x) x\cdot \nabla u(x)=x\cdot y~,
\end{eqnarray*}
as required. It is then standard that $x=\nabla q(y)$ and $(q_{ij}(y))=(p_{ij}(x))^{-1}$.
Then $x=\nabla q(y)=y+v\nabla v(y)$ and $y=x-u(x)\nabla u(x)$ implies $u\nabla u=v\nabla v$ so
$u^2 |\nabla u|^2 =v^2 |\nabla v|^2=u^2(1-|\nabla u|^2) |\nabla v|^2$ and so $|\nabla v(y)|^2=
\frac{\nabla u(x)|^2}{1-|\nabla u(x)|^2}$. Therefore,
\[\sqrt{1+|\nabla v(y)|^2}=(1-|\nabla u|^2)^{-\frac12} \hspace{.1in}\mbox{ and $u(x)=v(y)\sqrt{1+|\nabla v(y)|^2}$}~. \]
\end{proof}

\begin{theorem} \label{th2.1} Let L be defined by \eqref{eq2.10} and let y be defined by \eqref{eq2.30}. Then the image of S by L is the locally strictly convex graph (with respect to the induced hyperbolic metric)
\[ S^*=\{(y, v(y)\in \bfR^{n+1}_+ : u^* \in C^{\infty}(\ol{\Omega^*}), u^*(y)>0,\]
 with principal curvatures
$\kappa_i^*=(\kappa_i)^{-1}$ . Here $\kappa_i>0,\, i=1,\ldots,n$ are the principal curvatures of
S with respect to the induced de Sitter metric. Moreover the inverse map $L^*: S^* \goto S$
defined by
\[L^*((y,v(y))=(y+v(y)\nabla v(y), v(y)\sqrt{1+|\nabla v(y)|^2}\hspace{.1in}y \in \Omega^*\]
is the dual Legendre transform and hodograph map $x=\nabla q(y)$.
\end{theorem}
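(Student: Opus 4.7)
The plan is to combine Lemma \ref{lemma2.1}, a convexity characterization in the hyperbolic half-space dual to \eqref{eq1.245}, and the Legendre identity $D^{2}q(y)=(D^{2}p(x))^{-1}$.

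First I would read off from Lemma \ref{lemma2.1} that the first $n$ coordinates of $L(x,u(x))$ equal $x-u\nabla u=\nabla p(x)=y$ and the last coordinate equals $u\sqrt{1-|\nabla u|^{2}}=v(y)$, so $L(x,u(x))=(y,v(y))$. Because $p$ is strictly Euclidean-convex by \eqref{eq1.245}, the hodograph $y=\nabla p:\Omega\goto\Omega^{*}$ is a global diffeomorphism, so $L(S)$ is the graph of the positive function $v$ over $\Omega^{*}$, which we call $S^{*}$. The explicit formula for $L^{*}$ and the identity $L^{*}\circ L=\mathrm{id}$ then follow directly from Lemma \ref{lemma2.1}, since $x=\nabla q(y)$ is the inverse of $y=\nabla p(x)$ and $u=v\sqrt{1+|\nabla v|^{2}}$.

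Next I would verify that $S^{*}$ is locally strictly convex with respect to the hyperbolic metric. Running the hyperbolic analogue of the calculation \eqref{eq1.200}--\eqref{eq1.230} in the upper half-space model one finds that the hyperbolic second fundamental form of a graph $v(y)>0$ (with the appropriate downward unit normal) is
\[ h^{H}_{ij}=\frac{1}{v^{2}\tilde w}\bigl(\delta_{ij}+v_{i}v_{j}+vv_{ij}\bigr)=\frac{q_{ij}(y)}{v^{2}\tilde w},\qquad \tilde w=\sqrt{1+|\nabla v|^{2}}. \]
Thus local strict hyperbolic convexity of $S^{*}$ is equivalent to strict Euclidean convexity of $q(y)=\tfrac12(y^{2}+v(y)^{2})$; but $D^{2}q(y)=(D^{2}p(x))^{-1}$ by Lemma \ref{lemma2.1}, and $D^{2}p>0$ by \eqref{eq1.245}, so $D^{2}q>0$ and $S^{*}$ is hyperbolically locally strictly convex.

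Finally, for the inversion $\kappa_{i}^{*}=1/\kappa_{i}$ I would rewrite \eqref{eq1.248} in the symmetric form $A^{dS}[u]=\tfrac{1}{w}\,\gamma\,D^{2}p(x)\,\gamma$ with $\gamma^{ij}$ given by \eqref{eq1.247}, and produce the parallel expression $A^{H}[v]=\tfrac{1}{\tilde w}\,\gamma^{*}D^{2}q(y)\,\gamma^{*}$ with $\gamma^{*}_{ij}=\delta_{ij}-\tfrac{v_{i}v_{j}}{\tilde w(1+\tilde w)}$. Using $v_{i}(y)=u_{i}(x)/w$ and $w\tilde w=1$ (both read from Lemma \ref{lemma2.1}), a short algebraic calculation gives $\gamma^{*}(y)=\gamma^{-1}(x)$. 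Substituting this together with $D^{2}q(y)=(D^{2}p(x))^{-1}$ collapses the two matrices to $A^{H}=(A^{dS})^{-1}$, and hence $\kappa_{i}^{*}=1/\kappa_{i}$. Alternatively, one may invoke Montiel's identification of $L$ with the de Sitter Gauss map and the classical Oliker--Gerhardt--Schn\"urer principal-curvature inversion theorem cited earlier. The main technical burden lies in this last step: one must carry the hodograph $y=\nabla p(x)$ through the two distinct weight factors $w=\sqrt{1-|\nabla u|^{2}}$ and $\tilde w=\sqrt{1+|\nabla v|^{2}}$ (which are reciprocal) and match $\gamma$ against $\gamma^{*}$ cleanly; once the identity $\gamma^{*}(y)=\gamma^{-1}(x)$ is secured, the entire theorem reduces to the single Legendre relation $D^{2}p\cdot D^{2}q=I$.
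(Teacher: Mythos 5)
Your proof is correct and follows essentially the same route as the paper: both hinge on Lemma \ref{lemma2.1}, the Legendre identity $D^2q=(D^2p)^{-1}$, and the observation that the curvature matrices $A[u]$ and $A[v]$ factor symmetrically through the Hessians of $p$ and $q$; the paper phrases the factorization as $A[u]=(g^{-1/2})h(g^{-1/2})$ and shows $h^*=(u^2v^2)^{-1}h^{-1}$, $(g^*)^{-1/2}=uv\,(g^{-1/2})^{-1}$, whereas you distribute the $u$ and $v$ factors into $\gamma$ from \eqref{eq1.247} and a companion $\gamma^*$ and prove $\gamma^*=\gamma^{-1}$ directly, which is an equivalent calculation. (One small slip: the third reference after Oliker and Gerhardt is Schlenker, not Schn\"urer.)
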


\begin{proof} By Lemma \ref{lemma2.1} it remains only to show $\kappa_i^*=(\kappa_i)^{-1}$ . The principal curvatures
of $S, \,S^*$ are respectively the eigenvalues of the matrices
\[A[u]=(\gamma^{ij})(h_{ij})(\gamma^{ij}), \hspace{.1in} A[v]=({\gamma^*}^{ij}))(h_{ij}^*)({\gamma^*}^{ij})~,\]
where 
\[g^*_{ij}=\frac{\delta_{ij}+v_i v_j}{v^2},\hspace{.1in} ({\gamma^*}^{ij})=(g^*_{ij})^{-\frac12}, \hspace{.1in} h_{ij}^*=\frac{\delta_{ij}+vv_{ij}+v_i v_j}{v^2\sqrt{1+|\nabla v|^2}}~.\]
By Lemma \ref{lemma2.1},
\[h_{ij}^*=\frac{q_{ij}}{v^2\sqrt{1+|\nabla v|^2}}=\frac{u^2 \sqrt{1-|\nabla u|^2}}{v^2 u^2}q_{ij}=\frac1{u^2 v^2}(h_{ij})^{-1}~,\]
\[g^*_{ij}=\frac{\delta_{ij}+v_i v_j}{v^2}=\frac{\delta_{ij}+\frac{u_i u_j}{1-|\nabla u|^2}}{v^2}=\frac{g^{ij}}{u^2 v^2},\hspace{.1in}
({\gamma^*}^{ij})=uv (\gamma^{ij})^{-1}~,\]
and therefore
$A[v]=(A[u])^{-1}$ completing the proof.
\end{proof}

\begin{corollary} \label{cor2.1} If the graph $S=\{(x,u(x): x\in \Omega\} $ 
is a strictly locally convex spacelike graph with constant curvature $f(\kappa)=\sigma>1$ in the steady state space $\mathcal{H}^{n+1}$ with $\partial_{\infty}\mathcal{H}^{n+1}=\Gamma=\Omega$, then then dual graph  
$S^*=\{(x-u(x)\nabla u(x), u(x) \sqrt{1+|\nabla u|^2})=(y,v(y): y \in \Omega^*\}$
 is a strictly locally convex graph with principal curvatures $\kappa_i^*=(\kappa_i)^{-1}$ 
 of constant curvature $f^*(\kappa) =\sigma^{-1}$ in $\bfH^{n+1}$ with $\partial_{\infty}\bfH^{n+1}=\Gamma=\partial \Omega$ and conversely.
 \end{corollary}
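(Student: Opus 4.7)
The corollary is essentially a book-keeping consequence of Theorem~\ref{th2.1} together with the definition of the dual function $f^*$ in \eqref{eq1.110}, but one must verify the correspondence of asymptotic boundaries and run a symmetry argument for the converse. My plan is as follows.

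First, I would apply Theorem~\ref{th2.1} verbatim to the graph $S = \{(x,u(x)): x \in \Omega\}$. This immediately yields that $S^* = \{(y,v(y)): y \in \Omega^*\}$, with $y = x - u\nabla u$ and $v = u\sqrt{1-|\nabla u|^2}$, is a strictly locally convex graph in $\bfH^{n+1}$ whose principal curvatures satisfy $\kappa_i^* = \kappa_i^{-1}$. Then the curvature identity for $f^*$ is a one-line computation from the definition: since $f$ is homogeneous of degree one,
\begin{equation*}
f^*(\kappa^*) = f^*\!\left(\kappa_1^{-1},\ldots,\kappa_n^{-1}\right) = \bigl(f(\kappa_1,\ldots,\kappa_n)\bigr)^{-1} = \sigma^{-1}.
\end{equation*}

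Next I would verify the boundary correspondence $\partial_\infty S^* = \partial\Omega$. Since $u \in C^1(\overline{\Omega})$ vanishes on $\partial\Omega$ and $|\nabla u| < 1$ is uniformly bounded, the map $y = x - u\nabla u$ extends continuously to $\overline{\Omega}$ and equals the identity on $\partial\Omega$, so $\partial\Omega^* = \partial\Omega$ in the Euclidean sense; likewise $v = u\sqrt{1-|\nabla u|^2} \to 0$ as $x \to \partial\Omega$. Thus $S^*$ has compact Euclidean asymptotic boundary $\partial\Omega \subset \partial_\infty \bfH^{n+1}$, matching $\Gamma$.

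For the converse I would invoke the involutivity of the construction. By Lemma~\ref{lemma2.1}, the hodograph map $x = \nabla q(y)$ is the inverse of $y = \nabla p(x)$, and $L^*$ is a genuine two-sided inverse of $L$; meanwhile $(f^*)^* = f$ and reciprocation of principal curvatures is also involutive. So starting from a strictly locally convex graph in $\bfH^{n+1}$ with $f^*(\kappa) = \sigma^{-1}$ one applies exactly the same Legendre/Gauss construction (now from hyperbolic to de Sitter side, using the dual formulas already recorded in Lemma~\ref{lemma2.1}) to recover a graph in $\mathcal{H}^{n+1}$ with $f(\kappa) = \sigma$.

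There is no real obstacle: the only point requiring any care is the boundary identification, which needs the boundedness of $|\nabla u|$ strictly away from $1$ up to $\partial\Omega$ to guarantee that $L$ extends continuously and that $\Omega^*$ has the same Euclidean boundary as $\Omega$. Once Theorem~\ref{th2.1} is in hand, everything else is symmetry and the definition of $f^*$.
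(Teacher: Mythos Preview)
Your proposal is correct and follows exactly the route the paper intends: in the paper the corollary is stated without a separate proof, as an immediate consequence of Theorem~\ref{th2.1} and the definition of $f^*$, and your write-up simply makes that implication explicit (including the boundary identification and the involutivity for the converse). One minor remark: the appeal to homogeneity of $f$ in your computation of $f^*(\kappa^*)$ is unnecessary---the identity $f^*(\kappa_1^{-1},\ldots,\kappa_n^{-1})=f(\kappa)^{-1}$ is just the definition \eqref{eq1.110}.
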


\section{Formulas on hypersurfaces}
\label{sec2}
\setcounter{equation}{0}

In this section we will derive some basic identities on a hypersurface
by comparing the induced metric in steady state space $\mathcal{H}^{n+1}\subset dS_{n+1}$ and Minkowski space.\\

Let $\Sigma$ be a hypersurface in $\mathcal{H}^{n+1}$. We shall use $g$ and $\nabla$
to denote the induced metric and Levi-Civita connection
on $\Sigma$, respectively. As $\Sigma$ is also a submanifold of $\bfR^{n,1}$,
we shall usually distinguish a geometric quantity with respect to the
Minkowski metric by adding a `tilde' over the corresponding
quantity.
For instance, $\tg$ denotes the induced  metric on $\Sigma$
from $\bfR^{n,1}$, and $\tnabla$ is its Levi-Civita connection.

Let $\vx$ be the position vector of $\Sigma$ in $\bfR^{n,1}$ and set
\[ u = \vx \cdot \bf{e} \]
where $\bf{e}$$=(0,\cdots, 0, 1)$ is the unit vector in the positive
$x_{n+1}$ direction in $\bfR^{n+1}$, and `$\cdot$' denotes the Euclidean
inner product in $\bfR^{n+1}$.
We refer $u$ as the {\em height function} of $\Sigma$.

Throughout the paper we assume $\Sigma$ is orientable and let ${\bf n}$ be
a (global)
unit normal vector field to $\Sigma$ with respect to the de Sitter metric.
This also determines a unit normal $\nu$ to $\Sigma$ with respect to the
Minkowski metric by the relation
\[ \nu = \frac{{\bf n}}{u}. \]
We denote $ \nu^{n+1} = \bf{e} \cdot \nu$.

Let $(z_1, \ldots, z_n)$ be local coordinates and
\[ \tau_i = \frac{\partial}{\partial z_i}, \;\; i = 1, \ldots, n. \]
The de Sitter and Minkowski metrics of $\Sigma$ are given by
\[ g_{ij} = \langle \tau_i, \tau_j \rangle_D, \;\;
   \tg_{ij} = \langle\tau_i, \tau_j\rangle_M = u^2 g_{ij}, \]
while the second fundamental forms are
\begin{equation}
\label{eq3.10}
 \begin{aligned}
   h_{ij} \,& = \langle D_{\tau_i} \tau_j, {\bf n} \rangle_D
              = - \langle D_{\tau_i} {\bf n}, \tau_j \rangle_D, \\
\thh_{ij} \,& =\langle\nu, \tD_{\tau_i} \tau_j\rangle_M
              = - \langle\tau_j, \tD_{\tau_i} \nu\rangle_M,
    \end{aligned}
\end{equation}
where $D$ and $\tD$ denote the Levi-Civita connection of $\mathcal{H}^{n+1}$
and $\bfR^{n,1}$, respectively, and $\langle\; ,\; \rangle_D,$ $\langle\; , \;\rangle_M$ denote the corresponding inner product.\\

The Christoffel symbols $\Gamma_{ij}^k$ and $ \tilde{\Gamma}_{ij}^k$ are related by the formula
\begin{equation}
\label{eq2.70}
\Gamma_{ij}^k = \tilde{\Gamma}_{ij}^k - \frac{1}{u}
   (u_i \delta_{kj} + u_j \delta_{ik} - \tg^{kl} u_l \tg_{ij}).
\end{equation}
It follows that for $v \in C^2 (\Sigma)$
\begin{equation}
\label{eq2.80}
\nabla_{ij} v = v_{ij} - \Gamma_{ij}^k v_k
  = \tilde{\nabla}_{ij} v + \frac{1}{u}
    (u_i v_j + u_j v_i - \tg^{kl} u_k v_l \tg_{ij})
\end{equation}
where (and in sequel)
\[ v_i = \frac{\partial v}{\partial z_i}, \;
   v_{ij} = \frac{\partial^2 v}{\partial z_i z_j}, \; \mbox{etc.} \]
In particular,
\begin{equation}
\label{eq2.90}
\begin{aligned}
\nabla_{ij} u
 \,& = \tilde{\nabla}_{ij} u + \frac{2 u_i u_j}{u}
       - \frac{1}{u} \tg^{kl} u_k u_l \tg_{ij}
  \end{aligned}
\end{equation}
and
\begin{equation}
\label{eq2.100}
\nabla_{ij} \frac{1}{u}
  = - \frac{1}{u^2} \tilde{\nabla}_{ij} u
  + \frac{1}{u^3} \tg^{kl} u_k u_l \tg_{ij}.
\end{equation}
Moreover,
\begin{equation}
\label{eq2.110}
\begin{aligned}
\nabla_{ij} \frac{v}{u}
 \,&  = v \nabla_{ij} \frac{1}{u}
        + \frac{1}{u} \tilde{\nabla}_{ij} v
        - \frac{1}{u^2} \tg^{kl} u_k v_l \tg_{ij}.
 \end{aligned}
\end{equation}

In $\bfR^{n,1}$,
\begin{equation}
\label{eq2.120}
\begin{aligned}
             \tg^{kl} u_k u_l
      \,& = |\tilde{\nabla} u|^2 = (\nu^{n+1})^2-1 \\
            \tilde{\nabla}_{ij} u
      \,& = -\thh_{ij} \nu^{n+1}.
 \end{aligned}
\end{equation}
Therefore, by \eqref{eq1.230} and \eqref{eq2.100},
\begin{equation}
\label{eq2.130}
\begin{aligned}
\nabla_{ij} \frac{1}{u}
 \,&  =-\frac{1}{u^2}\tnabla_{ij}u+\frac{1}{u^3}\tg_{ij}\left[(\nu^{n+1})^2-1\right]\\
 \,&  =\frac{1}{u}\left(h_{ij}\nu^{n+1}-g_{ij}\right) .
 \end{aligned}
\end{equation}
We note that \eqref{eq2.110} and \eqref{eq2.130} still hold for
general local frames $\tau_1, \ldots, \tau_n$.
In particular, if $\tau_1, \ldots, \tau_n$ are orthonormal in the de Sitter
metric, then
$g_{ij} = \delta_{ij}$ and $\tg_{ij} = u^2 \delta_{ij}$.

We now consider equation~\eqref{eq1.10} on $\Sigma$.
Let $\mathcal{A}$ be the vector space of $n \times n$ matrices and
\[ \mathcal{A}^+= \{A = \{a_{ij}\} \in \mathcal{A}: \lambda (A) \in K_n^+\}, \]
where $\lambda (A) = (\lambda_1, \dots, \lambda_n)$ denotes the eigenvalues of $A$.
Let $F$ be the function defined by
\begin{equation}
\label{eq2.140}
F (A) = f (\lambda (A)), \;\; A \in \mathcal{A}^+
\end{equation}
and denote
\begin{equation}
\label{eq2.150}
F^{ij} (A) = \frac{\partial F}{\partial a_{ij}} (A), \;\;
  F^{ij, kl} (A) = \frac{\partial^2 F}{\partial a_{ij} \partial a_{kl}} (A).
\end{equation}
Since $F (A)$ depends only on the eigenvalues of $A$, if $A$ is symmetric
then so is the matrix $\{F^{ij} (A)\}$. Moreover,
\[ F^{ij} (A) = f_i \delta_{ij} \]
when $A$ is diagonal, and
\begin{equation}
\label{eq2.160}
 F^{ij} (A) a_{ij} = \sum f_i (\lambda (A)) \lambda_i = F (A),
\end{equation}
\begin{equation}
\label{eq2.170}
F^{ij} (A) a_{ik} a_{jk} = \sum f_i (\lambda (A)) \lambda_i^2.
\end{equation}

Equation~\eqref{eq1.10} can therefore be rewritten in a  local
frame $\tau_1, \ldots, \tau_n$ in the form
\begin{equation}
\label{eq2.180}
F (A[\Sigma]) = \sigma
\end{equation}
where $A[\Sigma] = \{g^{ik} h_{kj}\}$.
Let $F^{ij} = F^{ij} (A[\Sigma])$, $F^{ij, kl} = F^{ij, kl} (A[\Sigma])$.

\begin{lemma}
\label{lem2.10}
Let $\Sigma$ be a smooth hypersurface in $\mathcal{H}^{n+1}$ satisfying
equation~\eqref{eq1.10}. Then in a local orthonormal frame,
\begin{equation}
\label{eq2.190}
  F^{ij} \nabla_{ij} \frac{1}{u}
    = \frac{\sigma \nu^{n+1}}{u}-\frac{1}{u} \sum f_i.
\end{equation}
and
\begin{equation}
\label{eq2.200}
  F^{ij} \nabla_{ij} \frac{\nu^{n+1}}{u} =
-\frac{\sigma}{u} + \frac{\nu^{n+1}}{u} \sum f_i \kappa_i^2.
\end{equation}
\end{lemma}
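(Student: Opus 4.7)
The plan is to derive (2.190) directly from (2.130), and then obtain (2.200) by combining (2.110), (2.190), and a Minkowski Hessian computation for $\nu^{n+1}$.

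For (2.190), I would work in a local orthonormal de Sitter frame so that $g_{ij}=\delta_{ij}$ and $A[\Sigma]=\{h_{ij}\}$. Formula (2.130) then reduces to $\nabla_{ij}(1/u)=(h_{ij}\nu^{n+1}-\delta_{ij})/u$. Contracting against $F^{ij}$ and applying (2.160) together with the equation $F(A[\Sigma])=\sigma$ from (2.180) gives $F^{ij}h_{ij}=\sigma$, while $\sum F^{ii}=\sum f_i(\kappa)$ at a point where $A[\Sigma]$ is diagonalized. These two facts immediately yield (2.190).

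For (2.200), apply (2.110) with $v=\nu^{n+1}$ and contract with $F^{ij}$. The term $\nu^{n+1}F^{ij}\nabla_{ij}(1/u)$ is evaluated by (2.190), leaving $(1/u)F^{ij}\tilde{\nabla}_{ij}\nu^{n+1}$ and the mixed gradient correction. To compute the Minkowski Hessian, write $\nu^{n+1}=-\langle\nu,e\rangle_M$ and use Weingarten $\tD_{\tau_i}\nu=-\tilde{h}_i^{\,k}\tau_k$ (derived from (3.10) since $\langle\nu,\nu\rangle_M=-1$) to get $\tilde{\nabla}_i\nu^{n+1}=-\tilde{h}_i^{\,k}u_k$. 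Differentiating once more and invoking the Gauss formula $\tD_{\tau_i}\tau_j=\tilde{\Gamma}_{ij}^k\tau_k-\thh_{ij}\nu$ together with (2.120) produces
\[
\tilde{\nabla}_{ij}\nu^{n+1}=\nu^{n+1}\tilde{h}_i^{\,k}\thh_{jk}-u_k\tilde{\nabla}_i\tilde{h}_j^{\,k},
\]
the symmetry in $i,j$ following from Codazzi. Substituting the identity $\thh_{ij}=uh_{ij}-(u/w)\delta_{ij}$ coming from (1.23) and using (2.160), (2.170) evaluates the first summand as $\sum f_i\kappa_i^2-2\sigma/w+(\sum f_i)/w^2$.

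The main obstacle is the bookkeeping that forces the remaining pieces to collapse. Specifically, the extra $-2\sigma/w$ and $(\sum f_i)/w^2$ terms, together with the $(\nu^{n+1}/u)\sum f_i$ contribution from (2.190), must cancel against the mixed gradient correction $-(1/u^2)F^{ij}\tg_{ij}\tg^{kl}u_k(\nu^{n+1})_l$ and the Codazzi-derivative term $u_k F^{ij}\tilde{\nabla}_i\tilde{h}_j^{\,k}$. This cancellation uses the constancy of $F(A[\Sigma])=\sigma$ along $\Sigma$ (whose tangential derivative relates $F^{ij}\nabla_k h_{ij}=0$ via (2.70) to a corresponding Minkowski identity) together with $|\tilde{\nabla}u|^2=(\nu^{n+1})^2-1$ from (2.120). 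Once these cancellations are carried out, only $-\sigma/u$ and $(\nu^{n+1}/u)\sum f_i\kappa_i^2$ survive, proving (2.200). The computation is tensorial and routine but requires careful index tracking; this careful tracking, rather than any conceptual difficulty, is the real work of the lemma.
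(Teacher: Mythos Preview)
Your proposal is correct and follows essentially the same route as the paper. Both obtain \eqref{eq2.190} immediately from \eqref{eq2.130} and \eqref{eq2.160}; for \eqref{eq2.200}, both apply \eqref{eq2.110} with $v=\nu^{n+1}$, compute $\tilde{\nabla}_{ij}\nu^{n+1}$ via Weingarten/Gauss (the paper's \eqref{eq2.210}), expand $F^{ij}\thh_{ik}\thh_{jk}$ using $\thh_{ij}=u(h_{ij}-\nu^{n+1}\delta_{ij})$ (the paper's \eqref{eq2.220}), and eliminate the Codazzi term $F^{ij}\tg^{kl}u_l\tnabla_k\thh_{ij}$ by differentiating the curvature equation (the paper does this in the Minkowski form \eqref{eq2.230}--\eqref{eq2.250}, which is exactly your ``corresponding Minkowski identity'').
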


\begin{proof}
The first identity follows immediately from \eqref{eq2.130},
\eqref{eq2.160} and assumption \eqref{eq1.140}. To prove
\eqref{eq2.200} we recall the identities in $\bfR^{n,1}$
\begin{equation}
\label{eq2.210}
 \begin{aligned}
        (\nu^{n+1})_i
  \,& = - \thh_{il} \tg^{lk} u_k, \\
        \tilde{\nabla}_{ij} \nu^{n+1}
  \,& = - \tg^{kl} (-\nu^{n+1} \thh_{il} \thh_{kj} + u_l \tnabla_k \thh_{ij}).
\end{aligned}
\end{equation}
By \eqref{eq2.150}, \eqref{eq2.160}, \eqref{eq2.170}, and
$\tg^{ik} = \delta_{jk}/u^2$ we see that
\begin{equation}
\label{eq2.220}
 \begin{aligned}
 F^{ij} \tg^{kl} \thh_{il} \thh_{kj}
   = \,& \frac{1}{u^2} F^{ij} \thh_{ik} \thh_{kj} \\
   = \,& F^{ij} (h_{ik} h_{kj} - 2 \nu^{n+1} h_{ij} + (\nu^{n+1})^2 \delta_{ij}) \\
   = \,& f_i \kappa_i^2 - 2 \nu^{n+1} \sigma + (\nu^{n+1})^2 \sum f_i.
\end{aligned}
\end{equation}

As a hypersurface in $\bfR^{n,1}$,
it follows from \eqref{eq1.240} that $\Sigma$ satisfies
\[ f (u \tkappa_1 + \nu^{n+1}, \ldots, u \tkappa_n + \nu^{n+1}) = \sigma, \]
or equivalently,
\begin{equation}
\label{eq2.230}
F (\{\tg^{il} (u \thh_{lj} + \nu^{n+1} \tg_{lj})\}) = \sigma.
\end{equation}
Differentiating equation~\eqref{eq2.230}
and using $\tg_{ij} = u^2 \delta_{ij}$, $\tg^{ik} = \delta_{ik}/u^2$, we obtain
\begin{equation}
\label{eq2.240}
 F^{ij} (u^{-1} \tnabla_k \thh_{ij} + u^{-2}u_k \thh_{ij}
     + (\nu^{n+1})_k \delta_{ij}) = 0.
\end{equation}
That is,
\begin{equation}
\label{eq2.250}
\begin{aligned}
F^{ij} \tnabla_k \thh_{ij} + \,& u(\nu^{n+1})_k \sum F^{ii}
   = - \frac{u_k}{u} F^{ij} \thh_{ij}  \\
   = \,& - u_k F^{ij} (h_{ij} - \nu^{n+1} \delta_{ij}) \\
   = \,& - u_k \Big(\sigma - \nu^{n+1} \sum f_i\Big).
  \end{aligned}
\end{equation}

Finally, combining \eqref{eq2.110}, \eqref{eq2.190}, \eqref{eq2.210},
\eqref{eq2.220}, \eqref{eq2.250}, and the first identity in \eqref{eq2.120},
 we derive
\begin{equation}
\label{eq2.260}
\begin{aligned}
  F^{ij} \nabla_{ij} \frac{\nu^{n+1}}{u}
  = \,& \nu^{n+1} F^{ij} \nabla_{ij} \frac{1}{u}
        + \frac{|\tnabla u|^2}{u} F^{ij} \thh_{ij}
        - \frac{\nu^{n+1}}{u^3} F^{ij} \thh_{ik} \thh_{kj} \\
  = \,& \frac{\nu^{n+1}}{u} \Big(\nu^{n+1}\sigma-\sum f_i\Big)
        + \frac{|\tnabla u|^2}{u} \Big(\sigma - \nu^{n+1} \sum f_i\Big)  \\
    \,& +\frac{\nu^{n+1}}{u} \Big(f_i \kappa_i^2
     - 2 \nu^{n+1} \sigma + (\nu^{n+1})^2 \sum f_i\Big) \\
  = \,& -\frac{\sigma}{u} + \frac{\nu^{n+1}}{u} \sum f_i \kappa_i^2.
  \end{aligned}
\end{equation}
This proves \eqref{eq2.200}.
\end{proof}

\section{The asymptotic angle maximum principle and gradient estimates}
\label{sec3}
\setcounter{equation}{0}

In this section we show that the upward unit normal of a solution tends to
a fixed asymptotic angle on approach to the asymptotic boundary and that this holds approximately for the solutions of the approximate problem.. This implies a global (spacelike)
gradient bound on solutions.\\

Our estimates are all based on the use of special barriers (see  section 3 of \cite{Montiel1}). These correspond to horospheres for the dual problem in hyperbolic space and our argument follows that of  section 3 of \cite{GS08}. Let
 \[Q(r, c)=\{x\in\bfR^{n,1} \mid \langle x-c, x-c\rangle_M \leq -r^2\}\]
 be a ball of radius $r$ centered at $c$ in Minkowski space, where $c\in\bfR^{n+1}.$ Moreover, let $Q_+(r, c)$ denote the region above the upper hyperboloid
 and $Q_-(r,c)$ denote the region below the lower hyperboloid.  If we choose $a=(a', -r\sigma),$
 then $S_+(r,a)=\partial Q_+(r, a)\cap \mathcal{H}^{n+1}$ is an umbilical hypersurface in $\mathcal{H}^{n+1}$ with constant curvature $\sigma$ with respect to its upward normal vector. For convenience we sometimes call $S_+(r, a)$ an upper hyperboloid
 of constant curvature $\sigma$ in $\mathcal{H}^{n+1}.$ Similarly,
 when we choose $b=(b', r\sigma),$ then $S_-(r, b)=\partial Q_-(r, b)\cap\mathcal{H}^{n+1}$ is the lower hyperboloid
 of constant curvature $\sigma$ with respect to its upward normal vector.
 These hyperboloids serve as useful barriers.\\

 Now let $\Sigma$ be a hypersurface in $\mathcal{H}^{n+1}$ with $\partial\Sigma\subset P(\e):=\{x_{n+1}=\e\}$
 so $\Sigma$ separates $\{x_{n+1}\geq\e\}$ into an inside (bounded) region and outside (unbounded) one.
 Let $\Omega$ be the region in $\bfR^n\times\{0\}$ such that its vertical lift $\Omega^\e$ to $P(\e)$ is bounded by
 $\partial\Sigma$ (and $\bfR^n\setminus\Omega$ is connected and unbounded). (It is allowable that $\Omega$ have several connected
 components.)  Suppose $\kappa[\Sigma]\in K_n^+$ and $f(\kappa)=\sigma\in(1, \infty)$ with respect to its outer normal.

 \begin{lemma}\label{lem3.0}
 $$\begin{aligned}
 (i)\;&\Sigma\cap\{x_{n+1}<\epsilon\}=\emptyset.\\
 (ii)\;&\mbox{If $\partial\Sigma\subset Q_-(r, b)$, then $\Sigma\subset Q_-(r, b).$}\\
 (iii)\;&\mbox{If $Q_-(r, b)\cap P(\e)\subset\Omega^\e,$ then $Q_-(r, b)\cap\Sigma=\emptyset.$}\\
 (iv)\;&\mbox{If $Q_+(r, a)\cap\Omega^{\e}=\emptyset,$ then $Q_+(r, a)\cap\Sigma=\emptyset.$}\\
 \end{aligned}$$
 \end{lemma}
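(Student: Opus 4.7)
\smallskip

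\noindent\emph{Proof plan.} The unifying idea is the tangency principle: in each part we compare $\Sigma$ to a hypersurface of constant $f$-curvature, and at an interior first-contact point we invoke the strong maximum principle for the elliptic equation $F(A[u])=\sigma$.

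For (i), suppose $\Sigma$ dips below $P(\epsilon)$. Since $\partial\Sigma\subset P(\epsilon)$, the height function $x_{n+1}|_\Sigma$ attains its minimum value $\epsilon^*<\epsilon$ at some interior point $p_0$. The tangent plane at $p_0$ is horizontal, so in a local graph $x_{n+1}=u(x')$ one has $Du(x_0)=0$ and $D^2u(x_0)\geq 0$. Substituting $Du=0$ into \eqref{eq1.248} gives $A[u](p_0)=I-\epsilon^* D^2u(x_0)$, whose eigenvalues are $\kappa_i(p_0)=1-\epsilon^*\mu_i$ with $\mu_i\geq 0$. Hence every $\kappa_i(p_0)\leq 1$, and the monotonicity of $f$ forces $\sigma=f(\kappa(p_0))\leq f(1,\dots,1)=1$, contradicting $\sigma>1$. (If some $\epsilon^*\mu_i>1$, we simply lose $\kappa\in K_n^+$, which is also a contradiction.) So (i) requires no maximum principle at all.

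Parts (ii)--(iv) are instances of a single sliding-barrier argument built on the fact that $S_{\pm}(r,\cdot)$ have constant $f$-curvature $\sigma$ with respect to their upward normal, matching that of $\Sigma$. Translate the center vertically, $b\mapsto b+te_{n+1}$ or $a\mapsto a+te_{n+1}$; a direct calculation using the explicit graph representation $x_{n+1}=c_{n+1}\pm\sqrt{r^2+|x'-c'|^2}$ of $S_{\pm}$ shows that the lens $Q_\pm$ and its trace $Q_\pm\cap P(\epsilon)$ (a Euclidean disk) change monotonically in $t$. Choose the sign of the slide so that the relevant hypothesis on that trace is \emph{preserved} along the deformation: for (ii) the slide enlarges $Q_-$, keeping the hypothesis $\partial\Sigma\subset Q_-$; for (iii) the slide shrinks $Q_-\cap P(\epsilon)$ while keeping it inside $\Omega^\epsilon$; for (iv) the slide shrinks $Q_+\cap P(\epsilon)$ while keeping it disjoint from $\Omega^\epsilon$. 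Begin from an extreme parameter value at which $\Sigma\cap Q_\pm=\emptyset$ is evident (for $S_-$, push it below $\{x_{n+1}<\epsilon\}$ and quote (i); for $S_+$, push it so high that its Minkowski minimum exceeds $\max_\Sigma x_{n+1}$), and deform back toward $t=0$; if the conclusion failed, there would exist a first parameter $t^*$ at which $S_\pm$ meets $\Sigma$. The preserved hypothesis guarantees $S_\pm\cap P(\epsilon)$ avoids $\partial\Sigma$ at every stage, so the contact point $p_0$ is interior.

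At $p_0$ the two hypersurfaces share the same upward unit normal, sit on prescribed sides of each other, and both satisfy $F(A[u])=\sigma$. Writing them as admissible graphs in a common chart near $p_0$, the signed difference $w$ is nonnegative, vanishes with zero gradient at $p_0$, and satisfies a linear uniformly elliptic inequality (by concavity of $F$ on the admissible class and the fact that $F$ is elliptic there). The strong maximum principle forces $w\equiv 0$ in a neighborhood of $p_0$, and analytic continuation on the overlap of the two graphs extends this to global coincidence --- which is incompatible with the fact that $\partial\Sigma$ lies in $P(\epsilon)$ in a region disjoint from $S_{\pm}(r,\cdot)\cap P(\epsilon)$ by the preserved hypothesis. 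The one place where real care is needed is the case-by-case verification of the monotonicity of $Q_\pm\cap P(\epsilon)$ and the consequent preservation of the hypothesis during the slide; once that short calculation is in hand, everything reduces to one standard application of the strong maximum principle for concave fully nonlinear elliptic equations.
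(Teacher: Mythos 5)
Your handling of (i) is sound and is essentially the paper's argument in graph coordinates. The real trouble is with the sliding mechanism you use for (ii)--(iv).

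\textbf{The key error: vertical translation destroys the constant-curvature property of the barrier.} The hyperboloid $S_+(r,a)$ has constant de Sitter curvature $\sigma$ \emph{precisely because} $a_{n+1}=-r\sigma$. A short computation (using $\tilde\kappa_i = \mp 1/r$ for $S_\pm$ in the Minkowski metric and the relation $\kappa_i = u\tilde\kappa_i + 1/w$) shows that the upper hyperboloid centered at $(c',c_{n+1})$ has de Sitter curvature $\kappa_i\equiv -c_{n+1}/r$, and the lower one has $\kappa_i\equiv c_{n+1}/r$. So under $a\mapsto a+te_{n+1}$ the barrier $S_+$ has curvature $\sigma-t/r$, not $\sigma$. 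Your line ``the two hypersurfaces \ldots both satisfy $F(A[u])=\sigma$'' is false for the slid barrier, and the strong-maximum-principle step you build on it does not apply as stated. The paper avoids this entirely by deforming with Euclidean homotheties $h_s$ centered at $(a',0)$ or $(b',0)$ --- these are isometries of the half-space model of $\mathcal{H}^{n+1}$, so $h_s(S_\pm)$ retains constant curvature $\sigma$ at every stage, and the tangency argument works cleanly.

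\textbf{Why you still get away with it in (ii) and (iii), but not in (iv).} For (ii) you slide $b$ upward, giving the barrier curvature $\sigma+t/r>\sigma$; at a first interior contact $\Sigma$ lies below $S_-$, so the shape operators compare as $A[\Sigma]\geq A[S_-]$, hence $\sigma\geq\sigma+t^*/r$ --- an honest strict contradiction (you get this without any strong maximum principle). For (iii) you slide $b$ downward, giving curvature $\sigma-t/r<\sigma$; at first contact $\Sigma$ lies above $S_-$, so $A[\Sigma]\leq A[S_-]$ gives $\sigma\leq\sigma-t^*/r$, again a contradiction. But in (iv) you slide $a$ upward (so the trace at $P(\epsilon)$ shrinks and the hypothesis is preserved), giving curvature $\sigma-t/r<\sigma$; at first contact $\Sigma$ lies below $S_+$, so $A[\Sigma]\geq A[S_+]$ gives only $\sigma\geq\sigma-t^*/r$, which is no contradiction at all. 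Sliding the other way raises the barrier's curvature but expands the trace and so ruins the hypothesis-preservation you need to keep the contact interior. There is no direction of vertical translation that simultaneously preserves the hypothesis of (iv) and yields a contradiction. You must use the homothety deformation (as the paper does) so that the comparison at first contact is between two solutions of the \emph{same} equation $f(\kappa)=\sigma$, at which point the strong maximum principle finishes the job.
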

\begin{proof}
For (i) let $c=\min_{x\in\Sigma}x_{n+1}$ and suppose $0<c<\e.$ Then the horizontal plane $P(c)$ satisfies $f(\kappa)=1$
with respect to the upward normal, lies below $\Sigma,$ and has an interior contact point. Then $f(\kappa[\Sigma])\leq 1$
at this point, which leads to a contradiction (notice that in the Euclidean case we have the reverse inequality). 

For (ii), (iii), (iv) we consider the family $\{h_s\}_{s\in R}$ of isometries of $\mathcal{H}^{n+1}$ 
consisting of Euclidean homotheties. We perform
homothetic dilations from $(a',0)$ and $(b', 0)$ respectively, and then use the maximum principle.
For (ii), choose $s_0$ big enough so that $h_{s_0}(Q_-(r, b))$ contains $\Sigma$ and then decrease $s.$ Since the curvature of
$\Sigma$ and $S_-(r, b)$ are calculated with respect to their outward normals and both hypersurfaces satisfy $f(\kappa)=\sigma,$
there cannot be a first contact. 

For (iii) and (iv) we shrink $Q_+(r, a)$ and $Q_-(r, b)$ until they are respectively inside and outside
$\Sigma.$ When we expand $Q_-(r, b)$ there cannot be a first contact as above. Now decrease $s$ to a certain value $s_1\in R$
such that $h_{s_1}(Q_+(r, a))$ is disjoint from $\Sigma$ (outside of). Then we increase $s_1$ and suppose there is a first interior contact.
The outward normal to $\Sigma$ at this contact point is the upward normal to $S_+(r, a).$ Since the curvatures of $S_+(r, a)$
are calculated with respect to the upward normal and $S_+(r, a)$ satisfies $f(\kappa)=\sigma,$ 
we have a contradiction of the maximum principle.
\end{proof}

\begin{theorem}
\label{th3.1}
Let $\Sigma$ be a smooth strictly locally convex spacelike hypersurface in $\mathcal{H}^{n+1}$
satisfying equation~\eqref{eq1.10}. Suppose $\Sigma$ is globally a graph:
\[ \Sigma = \{(x, u (x)): x \in \Omega\} \]
where $\Omega$ is a domain in $\bfR^n \equiv \partial \mathcal{H}^{n+1}$. Then
\be
\label{eq3.20}
F^{ij}\nabla_{ij}\frac{\sigma - \nu^{n+1}}{u}
   = \frac{\sigma}{u}\left(1-\sum f_i\right)+\frac{\nu^{n+1}}{u}\left(\sigma^2-\sum f_i\kappa_i^2\right)  \leq 0
\ee
and so,
\begin{equation}
\label{eq3.30}
\frac{\sigma - \nu^{n+1}}{u}
  \geq \inf_{\partial \Sigma} \frac{\sigma - \nu^{n+1}}{u}
\;\; \mbox{on $\Sigma$}.
\end{equation}
Moreover, if $u = \epsilon>0$ on  $\partial \Omega$, then there exists
$\epsilon_0 > 0$ depending only on $\partial \Omega$, such that for all
$\epsilon \leq \epsilon_0$,
\begin{equation}
\label{eq3.40}
\frac{r_1\sqrt{\sigma^2-1}}{r_1^2-\e^2}+\frac{\e(\sigma-1)}{r_1^2-\e^2}>\frac{\sigma - \nu^{n+1}}{u}
   > -\frac{r_2\sqrt{\sigma^2-1}}{r_2^2-\e^2}-\frac{\e(1+\sigma)}{r_2^2-\e^2}
\;\; \mbox{on $\partial\Sigma$}
\end{equation}
where $r_2,$ $r_1$ are the maximal radius of exterior and interior spheres to
$\partial \Omega$, respectively. In particular, $\nu^{n+1}\rightarrow\sigma$
on $\partial\Sigma$ as $\e\rightarrow 0.$
\end{theorem}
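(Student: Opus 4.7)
The identity \eqref{eq3.20} is pure algebra. Write
\[F^{ij}\nabla_{ij}\frac{\sigma-\nu^{n+1}}{u} \;=\; \sigma\, F^{ij}\nabla_{ij}\frac{1}{u} \;-\; F^{ij}\nabla_{ij}\frac{\nu^{n+1}}{u}\]
and substitute the two formulas \eqref{eq2.190} and \eqref{eq2.200} from Lemma~\ref{lem2.10}; the claimed right hand side drops out after cancellation. To see it is non-positive, I would invoke two structural consequences already in hand: by \eqref{eq1.170}, $\sum f_i\geq 1$, so $(1-\sum f_i)\leq 0$; by \eqref{eq1.180}, $\sum f_i\kappa_i^2\geq f(\kappa)^2=\sigma^2$, so $(\sigma^2-\sum f_i\kappa_i^2)\leq 0$. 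Both multipliers $\sigma/u$ and $\nu^{n+1}/u$ are strictly positive ($\nu^{n+1}=1/\sqrt{1-|\nabla u|^2}>0$ since $\Sigma$ is a spacelike graph with upward-pointing normal), so the full expression is $\leq 0$. The operator $L=F^{ij}\nabla_{ij}$ is then a linear elliptic operator (the matrix $\{F^{ij}\}$ is positive definite by \eqref{eq1.90}), and $v=(\sigma-\nu^{n+1})/u$ is an $L$-supersolution, so the weak minimum principle on $\Sigma$ delivers \eqref{eq3.30}.

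For the boundary estimate \eqref{eq3.40} I would apply Lemma~\ref{lem3.0} to hyperboloid barriers at each $x_0\in\partial\Omega$. Let $B_{r_1}(\xi)$ and $B_{r_2}(\eta)$ be interior and exterior spheres tangent to $\partial\Omega$ at $x_0$. For the upper bound, introduce the lower hyperboloid $S_-(r,b)$ with $b=(\xi,r\sigma)$, choosing $r$ so that $S_-\cap P(\e)$ is exactly the circle $\partial B_{r_1}(\xi)\times\{\e\}$; solving $(r\sigma-\e)^2-r^2=r_1^2$ yields
\[r=\frac{\e\sigma+\sqrt{\e^2+r_1^2(\sigma^2-1)}}{\sigma^2-1}.\]
Since $Q_-(r,b)\cap P(\e)=\overline{B_{r_1}(\xi)}\subset\Omega^\e$, Lemma~\ref{lem3.0}(iii) gives $Q_-(r,b)\cap\Sigma=\emptyset$, i.e.\ $u\geq u_{S_-}$ on $\overline{B_{r_1}(\xi)}$ with equality at $x_0$. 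Because $\partial B_{r_1}(\xi)$ is tangent to $\partial\Omega$ at $x_0$, the two inward unit normals coincide there; comparing the inward normal derivatives of the non-negative function $u-u_{S_-}$ at its minimum point $x_0$ gives $|\nabla u(x_0)|\geq|\nabla u_{S_-}(x_0)|=r_1/\sqrt{r_1^2+r^2}$. Translating through $\nu^{n+1}=1/\sqrt{1-|\nabla u|^2}$ produces $\nu^{n+1}(x_0)\geq\sigma-\e/r$, so $(\sigma-\nu^{n+1})/u\leq 1/r$ at $x_0$.

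The lower bound is symmetric: take the upper hyperboloid $S_+(r',a)$ with $a=(\eta,-r'\sigma)$ and $r'=(-\e\sigma+\sqrt{\e^2+r_2^2(\sigma^2-1)})/(\sigma^2-1)$. Here $Q_+(r',a)\cap\Omega^\e=\emptyset$, so Lemma~\ref{lem3.0}(iv) yields $u\leq u_{S_+}$ near $x_0$, the reversed normal-derivative comparison gives $|\nabla u(x_0)|\leq|\nabla u_{S_+}(x_0)|$, and hence $(\sigma-\nu^{n+1})/u\geq-1/r'$ at $x_0$. The explicit bounds in \eqref{eq3.40} then follow from these $\pm 1/r$ inequalities by rationalizing to clear $\sigma^2-1$ from the denominator and applying the elementary estimate $\sqrt{\e^2+r_k^2(\sigma^2-1)}\leq\e+r_k\sqrt{\sigma^2-1}$. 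Once \eqref{eq3.40} is established, $(\sigma-\nu^{n+1})/\e$ stays bounded as $\e\to 0$, forcing $\nu^{n+1}\to\sigma$ uniformly on $\partial\Sigma$. The main obstacle is the barrier construction itself: correctly positioning the hyperboloids so that Lemma~\ref{lem3.0} applies in the intended direction, and justifying the one-sided normal-derivative comparison at the tangential boundary contact. The algebraic identity and the maximum principle step are routine given Lemma~\ref{lem2.10}.
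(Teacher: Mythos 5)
Your proposal is correct and follows the paper's route step for step: the same substitution of \eqref{eq2.190}--\eqref{eq2.200} with the sign bounds \eqref{eq1.170}, \eqref{eq1.180} and the minimum principle for \eqref{eq3.20}--\eqref{eq3.30}, and the same hyperboloid barriers via Lemma~\ref{lem3.0}(iii)--(iv) with identical radii (your $r,r'$ are the paper's $R_1,R_2$ satisfying $r_1^2-(R_1\sigma-\e)^2=-R_1^2$, $r_2^2-(R_2\sigma+\e)^2=-R_2^2$) and the same rationalization for \eqref{eq3.40}. Your explicit Hopf-type normal-derivative comparison at the tangent point is a faithful unpacking of the inequality $\frac{\sigma R_1-u}{R_1}<\nu^{n+1}<\frac{\sigma R_2+u}{R_2}$ that the paper simply asserts; the only small omission is the limiting case $r_2=\infty$, which the paper dispatches in one line.
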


\begin{proof}
It's easy to see that \eqref{eq3.20} follows from equations \eqref{eq2.190}, \eqref{eq2.200} and \eqref{eq1.170}, \eqref{eq1.180} .
Thus, \eqref{eq3.30} follows from the maximum principle.

In order to prove \eqref{eq3.40}, we first assume $r_2<\infty.$ Fix a point $x_0\in\partial\Omega$ 
and let $e_1$ be the outward pointing unit normal to $\partial\Omega$
at $x_0.$ Let $S_+(R_2, a),$ $S_-(R_1, b)$ be the upper and lower hyperboloid with 
centers $a=(x_0+r_2e_1, -R_2\sigma),$ $b=(x_0-r_1e_1, R_1\sigma)$
and radii $R_2$, $R_1$ respectively satisfying
\[r_2^2-(R_2\sigma+\e)^2=-R_2^2,\;\;\;r_1^2-(R_1\sigma-\e)^2=-R_1^2.\]
Then $Q_-(R_1, b)\cap P(\e)$ is an n-ball of radius $r_1$ internally tangent to $\partial\Omega^\e$ at $x_0$
while $Q_+(R_2, a)\cap P(\epsilon)$ is an n-ball of radius $r_2$ externally tangent to $\partial\Omega^\e$ at $x_0.$
By Lemma \ref{lem3.0} (iii) and (iv), $Q_\pm\cap\Sigma=\emptyset.$ Hence
\be\label{eq3.01}
\frac{\sigma R_1-u}{R_1}<\nu^{n+1}<\frac{\sigma R_2+u}{R_2}.
\ee
Moreover, by a simple calculation we have
\be\label{eq3.02}
\frac{1}{R_1}=\frac{-\e\sigma+\sqrt{r_1^2(\sigma^2-1)+\e^2}}{r_1^2-\e^2}
<\frac{r_1\sqrt{\sigma^2-1}}{r_1^2-\e^2}+\frac{\e(\sigma-1)}{r_1^2-\e^2},
\ee
\be\label{eq3.03}
\frac{1}{R_2}=\frac{\e\sigma+\sqrt{(\sigma^2-1)r_2^2+\e^2}}{r_2^2-\e^2}
<\frac{r_2\sqrt{\sigma^2-1}}{r_2^2-\e^2}+\frac{\e(1+\sigma)}{r_2^2-\e^2}.
\ee

Finally \eqref{eq3.40} follows from \eqref{eq3.01}, \eqref{eq3.02} and \eqref{eq3.03}.

If $r_2=\infty$, in the above argument one can replace $r_2$ by any $r>0$ and then let $r\rightarrow\infty.$
\end{proof}

From Theorem~\ref{th3.1} we conclude
\begin{corollary}
\label{cor3.2}
Let $\Omega$ be a bounded smooth domain in $\bfR^n$
and $\sigma >1$. Suppose $f$ satisfies
 conditions (\ref{eq1.85})-(\ref{eq1.150}) with $K = K_n^+$. Then for any
$\epsilon > 0$ sufficiently small, any admissible
solution $u^{\epsilon} \in C^\infty (\bar{\Omega})$ of the Dirichlet
problem  (\ref{eq1.250}),(\ref{eq1.290}) satisfies the {\em apriori} estimate
\be
\label{eq3.50}
|\nabla u^{\epsilon}| \leq C< 1
\;\;\; \mbox{in $\Omega$}
\ee
where $C$ is independent of $\epsilon$.
\end{corollary}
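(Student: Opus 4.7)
The plan is to combine the asymptotic angle maximum principle of Theorem~\ref{th3.1} with a uniform upper bound on $u^\epsilon$ coming from a lower-hyperboloid barrier. Recall that $\nu^{n+1} = 1/\sqrt{1-|\nabla u^\epsilon|^2}$, so \eqref{eq3.50} is equivalent to an upper bound $\nu^{n+1} \leq C'$ independent of $\epsilon$.

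First I would produce a uniform height bound $u^\epsilon \leq M$ with $M$ depending only on $\Omega$ and $\sigma$. Fix a point $b' \in \bfR^n$ (say, the centroid of $\Omega$) and, for each $\epsilon \leq 1$, pick $R > 0$ large enough so that the lower hyperboloid $S_-(R, b)$ of de Sitter curvature $\sigma$ with $b = (b', R\sigma)$ satisfies $\partial\Omega^\epsilon \subset Q_-(R, b)$. Concretely this is the quadratic inequality $(R\sigma - \epsilon)^2 - |y - b'|^2 \geq R^2$ for $y \in \partial\Omega$, which holds whenever $R^2(\sigma^2 - 1) \geq \diam(\Omega)^2 + 2R\sigma$; this can be arranged with $R$ depending only on $\diam(\Omega)$ and $\sigma$, hence independent of $\epsilon$. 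Lemma~\ref{lem3.0}(ii) then forces $\Sigma \subset Q_-(R, b)$, and reading off the apex of the lower sheet gives $u^\epsilon \leq b_{n+1} - R = R(\sigma - 1) =: M$.

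Second, the boundary estimate \eqref{eq3.40} gives
\[
\frac{\sigma - \nu^{n+1}}{u^\epsilon} \geq -\frac{r_2\sqrt{\sigma^2-1}}{r_2^2 - \epsilon^2} - \frac{\epsilon(1+\sigma)}{r_2^2-\epsilon^2} \geq -C_0
\]
on $\partial\Sigma$, where $r_2$ is the maximal exterior sphere radius to $\partial\Omega$ and $C_0$ depends only on $\sigma$ and $r_2$ once $\epsilon \leq \epsilon_0$ is sufficiently small (the case $r_2 = \infty$ is handled, as in the last sentence of the proof of Theorem~\ref{th3.1}, by replacing $r_2$ by arbitrarily large $r$ and letting $r \to \infty$). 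The maximum principle \eqref{eq3.30} then propagates this inequality globally: $\sigma - \nu^{n+1} \geq -C_0 u^\epsilon \geq -C_0 M$ on $\Sigma$, so $\nu^{n+1} \leq \sigma + C_0 M$. Consequently
\[
|\nabla u^\epsilon|^2 \leq 1 - (\sigma + C_0 M)^{-2} =: C^2 < 1,
\]
with $C$ independent of $\epsilon$, which is \eqref{eq3.50}.

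I do not expect a genuine obstacle here; the corollary is essentially a direct marriage of the barrier principle of Lemma~\ref{lem3.0} with the differential inequality already derived in Theorem~\ref{th3.1}. The only point requiring a little care is verifying that the lower-hyperboloid barrier can be selected with $R$ independent of $\epsilon$, which follows immediately from the dominant quadratic term $(\sigma^2-1)R^2$ in the constraint above and the assumption $\sigma > 1$.
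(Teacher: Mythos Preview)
Your argument is correct and is precisely the intended one: the paper gives no proof beyond the remark ``From Theorem~\ref{th3.1} we conclude'', and you have supplied the natural details---combining the one-sided maximum principle \eqref{eq3.30}, the boundary estimate \eqref{eq3.40}, and a height bound $u^{\epsilon}\le M$ obtained from the lower-hyperboloid barrier of Lemma~\ref{lem3.0}(ii). The only point I would adjust is cosmetic: rather than the centroid, you may take $b'$ to be any fixed point of $\overline{\Omega}$, so that $|y-b'|\le\diam(\Omega)$ holds for all $y\in\partial\Omega$; otherwise the reasoning is complete.
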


\section{The linearized operator and boundary estimates for second derivatives.}
\label{sec4}
\setcounter{equation}{0}
In this section we establish boundary estimates for second derivatives of admissible solutions.

\begin{theorem}
\label{th4.0}
Suppose that $f$ satisfies conditions (\ref{eq1.85})-(\ref{eq1.150}) with $K = K_n^+$.  If $\e$ is sufficiently small, then
\be\label{eq4.10}
u|D^2u|\leq C\;\;\;\mbox{on $\partial{\Omega}$}
\ee
where $C$ is independent of $\e.$
\end{theorem}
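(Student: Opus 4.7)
My plan is to reduce the estimate to showing that the de Sitter principal curvatures $\kappa_i = u\tilde{\kappa}_i + 1/w$ are uniformly bounded on $\partial\Omega$. Since Corollary~\ref{cor3.2} gives a bound on $1/w$, and the eigenvalues of the matrix $A[u]$ in~\eqref{eq1.248} differ from $1/w$ by a conjugate of $u\,u_{ij}$, this reduces precisely to $u|D^2 u| \leq C$ on $\partial\Omega$. I would fix $x_0 \in \partial\Omega$, flatten $\partial\Omega$ locally so that $x_0 = 0$, $\partial\Omega = \{x_n = \rho(x')\}$ with $\rho(0)=0$ and $D\rho(0)=0$, and $\Omega \subset \{x_n > \rho(x')\}$. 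Because $u\equiv \e$ on $\partial\Omega$, $Du(x_0) = u_n(x_0)\,e_n$ is purely normal, and Theorem~\ref{th3.1} gives $w(x_0) = 1/\sigma + O(\e)$; in particular $\gamma^{ij}(x_0)$ from~\eqref{eq1.247} is diagonal with $\gamma^{\alpha\alpha}=1$ and $\gamma^{nn}=1/w$.

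The tangential-tangential bound is immediate: differentiating $u(x',\rho(x'))=\e$ twice yields $u_{\alpha\beta}(x_0) = -\rho_{\alpha\beta}(0)\,u_n(x_0)$ for $\alpha,\beta<n$, so $u|u_{\alpha\beta}|\leq C\e$. For the mixed bound $u|u_{\alpha n}|\leq C$ I would follow the barrier-function technique of~\cite{GSS,GS11}. Applying a tangential operator $T = \partial_\alpha - \rho_\alpha\partial_n$ (which annihilates $u$ on $\partial\Omega$) to~\eqref{eq1.250} produces a linear equation for $Tu$ with bounded inhomogeneity via Lemma~\ref{lem2.10}. I would then build a barrier
\[
\Psi = A_1(\e-u) + A_2(\rho(x') - x_n) + A_3 |x-x_0|^2 \pm Tu
\]
on $\Omega \cap B_r(x_0)$, choosing $A_1 \ll A_2 \ll A_3$ so that the linearized operator $F^{ij}\nabla_{ij}$ annihilates or dominates $\Psi$ and $\Psi \geq 0$ on the boundary of $\Omega\cap B_r$ with equality at $x_0$; Hopf's lemma then yields $|\partial_n(Tu)(x_0)|\leq C$, i.e.\ $|u_{\alpha n}(x_0)|\leq C/\e$.

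The main obstacle is the pure normal bound on $u|u_{nn}|(x_0)$, and this is precisely where~\eqref{eq1.150} must be used. With the previous bounds, the matrix $A[u](x_0)$ from~\eqref{eq1.248} has the block form
\[
a_{\alpha\beta} = \tfrac{1}{w}\delta_{\alpha\beta}+O(\e), \qquad a_{\alpha n} = -\tfrac{\e u_{\alpha n}}{w^2}=O(1), \qquad a_{nn} = \tfrac{1}{w} - \tfrac{\e u_{nn}}{w^3}.
\]
The upper bound $\e u_{nn} \leq C$ follows quickly from admissibility plus the equation: the tangential block bounds force $n-1$ eigenvalues close to $1/w$, and the equation $f(\kappa)=\sigma$ combined with $f=0$ on $\partial K_n^+$~\eqref{eq1.120} rules out $\kappa_n \to 0^+$. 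For the lower bound, suppose for contradiction that $-\e u_{nn}(x_0) = M \to \infty$ along a sequence. Absorbing the $O(1)$ off-diagonal entries into the dominant $a_{nn}$ by standard perturbation, the eigenvalues of $A[u](x_0)$ are
\[
\bigl(\tfrac{1}{w}(1+O(\e)),\ldots,\tfrac{1}{w}(1+O(\e)),\,\tfrac{1}{w}(1+M/w^2 + O(1))\bigr).
\]
Using degree-one homogeneity of $f$ and applying~\eqref{eq1.150} at tangential arguments $(1+O(\e),\ldots,1+O(\e))\in B_{\delta_0}(\mathbf{1})$ for $\e$ small,
\[
F(A[u])(x_0) = \tfrac{1}{w}\, f\bigl(1+O(\e),\ldots,1+O(\e),\,1+M/w^2+O(1)\bigr) \;\geq\; \tfrac{1+\e_0/2}{w}
\]
for $M$ large and $\e$ small. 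Since $1/w = \sigma + O(\e)$, the right side exceeds $\sigma$, contradicting $F(A[u])=\sigma$; hence $u|u_{nn}|(x_0)\leq C$. The delicate points in Step~3 are the eigenvalue perturbation argument (which requires the tangent block to dominate the off-diagonal terms after rescaling) and pinning down the tangential eigenvalues to $B_{\delta_0}(\mathbf{1})$ using the asymptotic angle control $w \to 1/\sigma$ from Theorem~\ref{th3.1}.
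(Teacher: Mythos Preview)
Your three-step architecture (tangential--tangential, mixed, pure normal) and your use of \eqref{eq1.150} together with Theorem~\ref{th3.1} for the $u_{nn}$ bound coincide with the paper's approach, and Steps~1 and~3 are essentially correct (the paper invokes Lemma~1.2 of \cite{CNS3} for the eigenvalue perturbation you call ``standard'').

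Step~2, however, has genuine gaps. First, Lemma~\ref{lem2.10} is irrelevant: it gives intrinsic identities for $F^{ij}\nabla_{ij}(1/u)$ and $F^{ij}\nabla_{ij}(\nu^{n+1}/u)$, not a linearized equation for $Tu$. The boundary argument is carried out with the \emph{Euclidean} linearization $\mathcal{L}=G^{st}\partial_s\partial_t+G^s\partial_s+G_u$ of $G(D^2u,Du,u)=\sigma$, not with the intrinsic $F^{ij}\nabla_{ij}$. The decisive input is Lemma~\ref{lem4.30}: because horizontal rotations are isometries of $\mathcal{H}^{n+1}$, one has $\mathcal{L}u_i=0$ and $\mathcal{L}(x_iu_j-x_ju_i)=0$, and the paper therefore takes the \emph{rotational} tangential operator $T=\partial_\alpha+\sum_{\beta<n}B_{\beta\alpha}(x_\beta\partial_n-x_n\partial_\beta)$ from \cite{CNS1}, for which $\mathcal{L}(Tu)=0$ exactly. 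Your $T=\partial_\alpha-\rho_\alpha\partial_n$ would instead produce commutator terms like $G^{st}(\rho_\alpha)_s u_{nt}$ involving the very second derivatives you are trying to bound.

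Second, your barrier $\Psi$ lacks the term that makes the inequality close. The paper's barrier (Lemma~\ref{lem4.20}) is $v=u-\e+td-Nd^2$ with $N=C_4/\e$ on a ball of radius $\delta=c_0\e$; the quadratic-in-distance piece yields $-2NG^{st}d_sd_t\le -\tfrac{N\e}{nw}\sum f_i$, a negative contribution of order $-(1+\sum f_i)$ to $\mathcal{L}'v$. Scaled by $A/\e$ in the full test function $\Phi=\tfrac{A}{\e}v+\tfrac{C}{\delta^2}|x|^2\pm Tu$, this is what absorbs the $O(\e^{-1})(1+\sum f_i)$ coming from $G_u\,Tu$ and the $|x|^2$ term. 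In your $\Psi$, none of $A_1(\e-u)$, $A_2(\rho-x_n)$, $A_3|x-x_0|^2$ supplies such a term: $\mathcal{L}'u$ and $\mathcal{L}'(\rho-x_n)$ carry no useful sign, and $|\mathcal{L}'|x|^2|=O(\e)(1+\sum f_i)$ is far too small. No ordering of $A_1,A_2,A_3$ will force $\mathcal{L}'\Psi\le 0$ without a $d^2$-type term with coefficient of order $1/\e$.
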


Define the linearized operator of G at u (recall \eqref{eq1.249})
\be\label{eq4.120}
\mathcal{L}=G^{st}\partial_s\partial_t+G^s\partial_s+G_u
\ee
 where
\be\label{eq4.130}
G^{st}=\frac{\partial G}{\partial u_{st}},\;\;G^s=\frac{\partial G}{\partial u_s},\;\;G_u=\frac{\partial G}{\partial u}.
\ee
Note that
\be\label{eq4.140}
G^{st}=-\frac{u}{w}F^{ij}\gamma^{is}\gamma^{jt},\;\;G^{st}u_{st}=uG_u=\sigma-\frac{\sum f_i}{w}.
\ee
After some straightforward but tedious calculations we derive
\be\label{eq4.150}
G^s=\frac{u_s}{w^2}\sigma+2\frac{F^{ij}a_{ik}}{w(1+w)}
\left(u_k\gamma^{sj}w+u_j\gamma^{ks}\right)-2\frac{F^{ij}u_i\gamma^{sj}}{w^2}.
\ee
It follows that
\begin{lemma}
\label{lem4.10}
Suppose that $f$ satisfies \eqref{eq1.50}, \eqref{eq1.60}, \eqref{eq1.80} and \eqref{eq1.90}. Then
\be\label{eq4.160}
|G^s|\leq C_0(1+\sum f_i),
\ee
where $C_0$ denotes a controlled constant independent of $\e.$
\end{lemma}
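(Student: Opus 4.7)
The plan is to bound each of the three summands in the formula \eqref{eq4.150} for $G^s$ separately, working in a (Euclidean) orthonormal frame at the point in question that simultaneously diagonalizes the symmetric matrix $A=\{a_{ij}\}$. In such a frame $a_{ij}=\kappa_i\delta_{ij}$ and $F^{ij}=f_i\delta_{ij}$, so every contraction in \eqref{eq4.150} collapses to a single sum indexed by $i$, and bounding $|G^s|$ reduces to bounding scalar sums. Note that since the statement is invariant under orthogonal change of Euclidean coordinates, bounding $|G^s|$ in the diagonal frame bounds it in the original frame up to a fixed constant.

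The key a priori inputs are: (i) the gradient estimate of Corollary \ref{cor3.2}, which gives a uniform lower bound $w\geq w_0>0$ and hence a uniform bound on the entries of $\gamma^{ij}=\delta_{ij}+u_iu_j/(w(1+w))$; (ii) homogeneity of degree one \eqref{eq1.140}, which by Euler's identity yields $\sum f_i\kappa_i=F(A)=\sigma$; and (iii) positivity $f_i>0$ and $\kappa_i>0$. Combined with $|u_i|<1$, these give the pointwise controls we need.

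With these tools in hand the three summands are handled as follows. The first term $\sigma u_s/w^2$ is bounded by a fixed constant using $|u_s|\leq |\nabla u|<1$ and $w\geq w_0$. The third term $-2F^{ij}u_i\gamma^{sj}/w^2$ becomes, in the diagonal frame, $-2w^{-2}\sum_i f_i u_i\gamma^{si}$, which is controlled by $C\sum f_i$ since $|u_i\gamma^{si}|\leq C$. For the middle term $2(w(1+w))^{-1}F^{ij}a_{ik}(u_k\gamma^{sj}w+u_j\gamma^{ks})$, in the diagonal frame $F^{ij}a_{ik}=f_i\kappa_i\delta_{ik}$, so this term is bounded in absolute value by
\[
C\sum_i f_i\kappa_i\bigl(|u_i\gamma^{si}|+|u_i\gamma^{is}|\bigr)\leq C\sum_i f_i\kappa_i = C\sigma,
\]
using $\sum f_i\kappa_i=\sigma$ and the uniform bounds on $u_i$ and $\gamma^{ij}$. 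Adding the three estimates yields $|G^s|\leq C+C\sum f_i+C\leq C_0(1+\sum f_i)$, with $C_0$ depending only on $\sigma$, $n$, and the gradient bound, hence independent of $\epsilon$.

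I do not anticipate any real obstacle here; the main point to get right is the bookkeeping that shows the potentially large quantity $F^{ij}a_{ik}$ is tamed by the homogeneity identity $\sum f_i\kappa_i=\sigma$ rather than needing to be absorbed into $\sum f_i$. The only subtlety is ensuring the frame-independence of the final estimate, which is handled by the diagonalization argument at the start.
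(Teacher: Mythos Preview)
Your proposal is correct and matches the paper's (implicit) approach: the paper states the lemma immediately after deriving \eqref{eq4.150} with only ``It follows that'' and gives no further proof, so what you have written is a faithful fleshing-out of that claim. The three ingredients you identify --- the gradient bound $w\geq w_0>0$ from Corollary~\ref{cor3.2}, the homogeneity identity $\sum f_i\kappa_i=F(A)=\sigma$ (cf.\ \eqref{eq2.160}), and the uniform control of $\gamma^{ij}$ --- are exactly what make each summand in \eqref{eq4.150} bounded by a constant multiple of $1+\sum f_i$, and your diagonalization device to reduce $F^{ij}a_{ik}$ to $f_i\kappa_i\delta_{ik}$ is the natural way to execute this.
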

Since $\gamma^{sj}u_s=u_j/w,$
\be\label{eq4.170}
G^su_s=\frac{1-w^2}{w^2}\sigma+2\frac{F^{ij}a_{ik}u_ku_j}{w^2}-2\frac{F^{ij}u_iu_j}{w^3}.
\ee
Let
\be\label{eq4.180}
\mathcal{L'}=-\mathcal{L}+G_u=-G^{st}\partial_s\partial_t-G^s\partial_s.
\ee
Then from \eqref{eq4.140} and \eqref{eq4.170} we obtain
\be\label{eq4.190}
\begin{aligned}
\mathcal{L'}u=\;&\frac{1}{w}\sum f_i-\frac{\sigma}{w^2}-2\frac{F^{ij}a_{ik}u_ku_j}{w^2}+2\frac{F^{ij}u_iu_j}{w^3}\\
          \leq\;& C_1+C_2\sum f_i.
\end{aligned}
\ee
In the following we denote by  $C_1, C_2, \ldots$ controlled constants independent of $\e.$\\

We will employ a barrier function of the form
\be\label{eq4.200}
v=u-\e+td-Nd^2
\ee
where $d$ is the distance function from $\partial\Omega,$ and $t,$ $N$ are positive constants to be determined.
We may take $\delta>0$ small enough so that $d$ is smooth in $\Omega_\delta=\Omega\cap B_{\delta}(0).$

\begin{lemma}
\label{lem4.20}
For $\delta=c_0 \e,\, N=\frac{C_4}{\e},\,t=c_0C_4$ with $C_4$ sufficiently large and $c_0$ sufficiently small independent of $\e$,
\[\mathcal{L'}v\leq-(1+\sum f_i)\;\;\mbox{in $\Omega_\delta$,}\;\;\;v\geq 0\;\;\mbox{on $\partial\Omega_\delta$}.\]
\end{lemma}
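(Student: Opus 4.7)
First I would verify the boundary condition. The relative boundary $\partial\Omega_\delta$ consists of $\partial\Omega$ (where $d=0$ and $u=\e$, hence $v=0$) together with the inner level set $\{d=\delta\}$. On the latter, the specific parameter choice $\delta=c_0\e$, $N=C_4/\e$, $t=c_0 C_4$ yields
\[t\delta-N\delta^2=c_0^2 C_4\e-c_0^2 C_4\e=0,\]
so $v=u-\e\geq 0$ there by Lemma \ref{lem3.0}(i).

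For the PDE inequality I would expand, using linearity of $\mathcal{L}'$ and the product rule $\mathcal{L}'(d^2)=2d\,\mathcal{L}'d-2G^{st}d_sd_t$,
\[\mathcal{L}'v=\mathcal{L}'u+(t-2Nd)\mathcal{L}'d+2NG^{st}d_sd_t,\]
and estimate each piece. The term $\mathcal{L}'u$ is controlled by \eqref{eq4.190}, giving $\mathcal{L}'u\leq C_1+C_2\sum f_i$. For $\mathcal{L}'d$ I would combine Lemma \ref{lem4.10} with the explicit form $G^{st}=-(u/w)F^{ij}\gamma^{is}\gamma^{jt}$, using that $u\leq C\e$ in $\Omega_\delta$ (from $u=\e$ on $\partial\Omega$, the gradient estimate of Corollary \ref{cor3.2}, and $d\leq \delta=c_0\e$) and that $|D^2 d|,|Dd|=O(1)$; this yields $|\mathcal{L}'d|\leq C_3(1+\sum f_i)$. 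Since $|t-2Nd|\leq c_0 C_4$ throughout $\Omega_\delta$, the mixed term contributes at most $c_0 C_4 C_3(1+\sum f_i)$. The decisive term $2NG^{st}d_sd_t=-(2Nu/w)F^{ij}\gamma^{is}\gamma^{jt}d_sd_t$ is strictly negative since $F^{ij}$ is positive definite; using $Nu\geq N\e=C_4$ and $|Dd|=1$ in $\Omega_\delta$, the aim is to bound it above by $-c_5 C_4(1+\sum f_i)$ for a controlled positive constant $c_5$. Choosing first $C_4$ large so that $c_5 C_4$ dominates the $(C_1+C_2+1)$-multiple of $(1+\sum f_i)$, and then $c_0$ small so that $c_0 C_4 C_3\leq \tfrac12 c_5 C_4$, delivers $\mathcal{L}'v\leq-(1+\sum f_i)$.

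The hardest step will be producing the lower bound $F^{ij}\gamma^{is}\gamma^{jt}d_sd_t\geq c(1+\sum f_i)$, because raw ellipticity of $F^{ij}$ only gives $\min_i f_i\cdot|\gamma\,Dd|^2$, and $\min_i f_i$ need not be comparable to $\sum f_i$. I would resolve this by working in a frame adapted to $\partial\Omega$: since $u=\e$ is constant on $\partial\Omega$ one has $u_i=u_\nu\delta_{in}$ there, so $\gamma^{ij}$ is diagonal with $\gamma^{nn}$ bounded below by the gradient estimate; combining this with the concavity and normalization \eqref{eq1.100}--\eqref{eq1.140} of $f$ and the dual-concavity consequence \eqref{eq1.180}, one can extract the required multiple of $\sum f_i$ in the $Dd$-direction. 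Once that structural estimate is in hand, the constant balancing above completes the proof.
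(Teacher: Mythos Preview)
Your outline matches the paper's proof almost step for step: the same boundary verification (which the paper leaves implicit), the same expansion $\mathcal{L}'v=\mathcal{L}'u+(t-2Nd)\mathcal{L}'d+2NG^{st}d_sd_t$, the bound \eqref{eq4.190} on $\mathcal{L}'u$, the bound $|\mathcal{L}'d|\leq C_3(1+\sum f_i)$ coming from $|Dd|=1$, $|D^2d|\leq C$ and Lemma~\ref{lem4.10}, and the same balancing of constants at the end. Two small remarks: your sign on the quadratic term is the correct one (the paper's displayed $-2NG^{st}d_sd_t$ in \eqref{eq4.230} is a typo, since $G^{st}$ is negative definite and the subsequent inequality only makes sense with your sign); and your refinement $u\leq C\e$ in $\Omega_\delta$ is true but not needed for the $\mathcal{L}'d$ bound---the paper simply uses that $u$ and $\gamma$ are bounded.

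On the hard step the paper does exactly what you propose: work in the adapted frame at the boundary point so that $Dd\approx e_n$ (hence $-G^{st}d_sd_t\geq -\tfrac12 G^{nn}$ for $\delta$ small), and then assert
\[
-\tfrac12 G^{nn}=\frac{u}{2w}\,F^{ij}\gamma^{in}\gamma^{jn}\ \geq\ \frac{u}{2nw}\sum F^{ii},
\]
which together with $u\geq\e$ gives $-G^{st}d_sd_t\geq\tfrac{\e}{2nw}\sum f_i$ and hence $2NG^{st}d_sd_t\leq-\tfrac{N\e}{nw}\sum f_i$. Your appeal to the dual-concavity consequence \eqref{eq1.180} is misplaced here, however: that inequality bounds $\sum\kappa_i^2 f_i$ from below by $\sigma^2$ and does not yield a lower bound of the form $c\sum f_i$ for the quadratic form in the $Dd$-direction; the paper invokes neither \eqref{eq1.180} nor the concavity \eqref{eq1.100} at this point.
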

\begin{proof}
Since $|Dd|=1$ and $-CI\leq D^2d\leq CI,$ we have
\be\label{eq4.210}
\begin{aligned}
\left|\mathcal{L'}d\right|=\;&\left|-G^{st}d_{st}-G^sd_s\right|\\
                         \leq\;&C_3(1+\sum f_i).
\end{aligned}
\ee
Furthermore, since $d_n(0)=1,$ $d_{\beta}(0)=0$ for all $\beta<n,$ we have, when $\delta>0$ sufficiently small,
\be\label{eq4.220}
\begin{aligned}
-G^{st}d_sd_t\geq\;& -G^{nn}d_n^2-2\sum_{\beta<n}G^{n\beta}d_nd_\beta\\
             \geq\;&\frac{-1}{2}\sum G^{nn}=\frac{u}{2w}\sum F^{ij}\gamma^{in}\gamma^{jn}\\
             \geq\;&\frac{u}{2nw}\sum F^{ii}.
\end{aligned}
\ee
Therefore,
\be\label{eq4.230}
\begin{aligned}
\mathcal{L'}v=\;&\mathcal{L'}u+(t-2Nd)\mathcal{L'}d-2NG^{st}d_sd_t\\
          \leq\;&C_1+C_2\sum f_i+C_3(t+2N\delta)(1+\sum f_i)-\frac{N\e}{nw}\sum f_i\\
          \leq\;&(C_1+tC_3+2N\delta C_3-\frac{N\e}{2n})+(C_2+tC_3+2N\delta C_3-\frac{N\e}{2n})\sum f_i.\\
\end{aligned}
\ee
Now if we require $0<c_0<\frac{1}{18nC_3}$
and $C_4\gg 3n\max\{C_1, C_2\}+3n.$ let $N=C_4/\e,$ $t=C_4c_0,$ $\delta=c_0\e,$ then
Lemma \ref{lem4.20} is proved.
\end{proof}

The following lemma is proven in \cite{GS08}; it applies to our situation since horizontal rotations are isometries for $\mathcal{H}^{n+1}.$

\begin{lemma}
\label{lem4.30}
Suppose that $f$ satisfies \eqref{eq1.50}, \eqref{eq1.60}, \eqref{eq1.80} and \eqref{eq1.90}. Then
\be\label{eq4.240}
\mathcal{L}(x_iu_j-x_ju_i)=0,\;\;\;\mathcal{L}u_i=0,\;\;1\leq i, j\leq n.
\ee
\end{lemma}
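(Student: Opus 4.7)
The plan is to exploit the geometric invariance of the equation $G(D^2u,Du,u)=\sigma$ under the group of isometries of $\mathcal{H}^{n+1}$ that preserve the upper half-space structure. In the half-space model these isometries include in particular the horizontal translations $(x,x_{n+1})\mapsto(x+c,x_{n+1})$ and the horizontal rotations $(x,x_{n+1})\mapsto(Rx,x_{n+1})$ with $R\in SO(n)$. Since $G(D^2u,Du,u)=\sigma$ is the coordinate form of the geometric condition $f(\kappa[\Sigma])=\sigma$, and the principal curvatures $\kappa[\Sigma]$ (with respect to the induced de~Sitter metric and the chosen normal) are invariant under these isometries, any function obtained from $u$ by composing with such an isometry is again a solution.

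Concretely, I would first take a one-parameter family of horizontal translations. For fixed $i$, set $u^{c}(x)=u(x-c\,e_i)$. By the invariance above, $G(D^2u^c,Du^c,u^c)=\sigma$ for all small $c$. Differentiating this identity with respect to $c$ at $c=0$ and using the definitions \eqref{eq4.120}--\eqref{eq4.130} of $\mathcal{L}$, one obtains $\mathcal{L}\bigl(\partial_c u^c|_{c=0}\bigr)=0$. Since $\partial_c u^c|_{c=0}=-u_i$, this yields $\mathcal{L}u_i=0$. (Equivalently, one may simply differentiate $G(D^2u,Du,u)=\sigma$ directly in $x_i$; the point is that $G$ has no explicit $x$-dependence.)

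Next, to get the rotational identity, I would take the one-parameter family of rotations in the $(i,j)$-plane, $R_t=\exp(tA)$ with $A=e_i\wedge e_j$ antisymmetric, and set $u^t(x)=u(R_t^{-1}x)$. Again $G(D^2u^t,Du^t,u^t)=\sigma$ for all $t$. A direct computation gives
\[
\frac{d}{dt}\Big|_{t=0} u^t(x)=-\nabla u(x)\cdot(Ax)=x_i u_j-x_j u_i,
\]
so differentiating the equation in $t$ at $t=0$ produces $\mathcal{L}(x_i u_j-x_j u_i)=0$, as claimed.

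The only thing to verify with care is that horizontal translations and rotations really are isometries of $\mathcal{H}^{n+1}$ in the half-space model \eqref{eq1.70}; this is immediate because the metric $g=x_{n+1}^{-2}(dx^2-dx_{n+1}^2)$ depends on the horizontal variables only through the Euclidean quadratic form $dx^2$, which is $O(n)$-invariant and translation-invariant. This also matches the statement that $G$ depends on $(D^2u,Du,u)$ only through $u$, $|\nabla u|^2$, and the matrix $A[u]$ of \eqref{eq1.248}, and these combinations are manifestly invariant under horizontal rigid motions. I do not anticipate a real obstacle: the content of the lemma is purely the chain rule applied to the symmetries of the ambient geometry, and the argument is identical (modulo the sign of the metric term) to the hyperbolic-space case treated in \cite{GS08}.
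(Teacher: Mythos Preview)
Your proposal is correct and is essentially the same approach as the paper's: the paper does not give an independent proof but simply cites \cite{GS08} and remarks that horizontal rotations are isometries of $\mathcal{H}^{n+1}$, which is precisely the symmetry argument you have written out in detail.
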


\begin{proof}[Proof of Theorem~\ref{th4.0}] Consider an arbitrary point on $\partial\Omega,$ which we may assume to be the origin of $\bfR^n$ and choose the coordinates so that the positive $x_n$ axis is the interior normal to $\partial\Omega$ at the origin. There exists a uniform constant $r>0$ such that $\partial\Omega\cap B_r(0)$ can be represented as a graph
\[x_n=\rho(x')=\frac{1}{2}\sum_{\alpha, \beta<n}B_{\alpha\beta}x_\alpha x_\beta+O(|x'|^3),\;\;x'=(x_1, \cdots, x_{n-1}).\]
Since $u=\e$ on $\partial\Omega,$ we see that $u(x', \rho(x'))=\e$ and
\[u_{\alpha\beta}(0)=-u_n\rho_{\alpha\beta},\;\;\;\alpha, \beta<n.\]
Consequently,
\[|u_{\alpha\beta}(0)|\leq C|Du(0)|,\;\;\;\alpha, \beta<n,\]
where $C$ depends only on the (Euclidean maximal principal) curvature of $\partial\Omega.$

Next, following \cite{CNS1} we consider for fixed $\alpha<n$ the approximate tangential operator
\be\label{eq4.250}
T=\partial_\alpha+\sum_{\beta<n}B_{\beta\alpha}(x_\beta\partial_n-x_n\partial_\beta).
\ee
We have
\be\label{eq4.260}
\begin{aligned}
|Tu|\leq C,\;\;&\mbox{in $\Omega\cap B_\delta(0)$}\\
|Tu|\leq C|x|^2,\;\;&\mbox{on $\partial\Omega\cap B_\delta(0)$}
\end{aligned}
\ee
since $u=\e$ on $\partial\Omega.$ By Lemma \ref{lem4.30} and \eqref{eq4.140}, \eqref{eq4.260},
\be\label{eq4.270}
\begin{aligned}
\left|\mathcal{L'}(Tu)\right|=\;&\left|-\mathcal{L}Tu+G_u Tu\right|\\
                             =\;&\left|G_u Tu\right|\\
                          \leq\;&\frac{C_5}{\e}(1+\sum f_i).
\end{aligned}
\ee
A straightforward calculation gives
\be\label{eq4.280}
\begin{aligned}
\left|\mathcal{L'}|x|^2\right|=\;&\left|-2\sum G^{ss}-2\sum x_sG^s\right|\\
                           \leq\;& C_6\e(1+\sum f_i).
\end{aligned}
\ee

Now let
\[\Phi=\frac{A}{\e}v+\frac{C}{\delta^2}|x|^2\pm Tu.\]
By Lemma \ref{lem4.20} and \eqref{eq4.270}, \eqref{eq4.280},
\be\label{eq4.290}
\mathcal{L'}\Phi \leq -\frac{A}{\e}(1+\sum f_i)+\frac{C_6C}{{c_0}^2 \e}(1+\sum f_i)+\frac{C_5}{\e}(1+\sum f_i)\;\;\mbox{in $\Omega\cap B_\delta$}
\ee
Choosing $A\gg C_5+\frac{C_6C}{{c_0}^2}$ makes $\mathcal{L'}\Phi\leq 0$ in $\Omega\cap B_\delta.$ It is also easy to see that $\Phi\geq 0$ on $\partial(\Omega\cap B_\delta).$

By the maximum principle $\Phi\geq 0$ in $\Omega\cap B_\delta.$ Since $\Phi(0)=0,$ we have $\Phi_n(0)\geq 0$ which gives
\be\label{eq4.300}
|u_{\alpha n}(0)|\leq \frac{A(u_n(0)+C_4 c_0)}{\e}\leq \frac{C}{\e}.
\ee

Finally to estimate $|u_{nn}(0)|$ we use our hypothesis \eqref{eq1.150} and Theorem \ref{th3.1}. We may assume $[u_{\alpha\beta}(0)],$ $1\leq\alpha, \beta<n,$
to be diagonal. Note that $u_\alpha(0)=0$ for $\alpha<n.$ We have at $x=0$
$$A[u]=\frac{1}{w}
\left[\begin{array}{cccc}
1-uu_{11}&0&\cdots&-\frac{uu_{1n}}{w}\\
0&1-uu_{22}&\cdots&-\frac{uu_{2n}}{w}\\

\vdots&\vdots&\ddots&\vdots\\
-\frac{uu_{n1}}{w}&-\frac{uu_{n2}}{w}&\cdots&1-\frac{uu_{nn}}{w^2}\\
\end{array}\right]. $$

By Lemma 1.2 in \cite{CNS3}, if $|\e u_{nn}(0)|$ is very large, the eigenvalues $\lambda_1, \cdots, \lambda_n$ of $A[u]$
are asymptotically given by
\be\label{eq4.310}
\begin{aligned}
\lambda_\alpha=\;&\frac{1}{w}(1+|\e u_{nn}(0)|)+o(1),\;\;\alpha<n\\
\lambda_n=\;&\frac{|\e u_{nn}(0)|}{w^3}\left(1+O\left(\frac{1}{|\e u_{nn}(0)|}\right)\right).
\end{aligned}
\ee
If $|\e u_{nn}(0)|\geq R$ where $R$ is a controlled constant only depends on $\sigma$. By the hypothesis \eqref{eq1.150} and Theorem \ref{th3.1},
\[\sigma=\frac{1}{w}F(wA[u](0))\geq (\sigma-C\epsilon)F(wA[u](0))\geq (\sigma-C\e)(1+\e_0)\geq \sigma (1+\frac{\e_0}2)\]
leads to a contradiction. Therefore
\[|u_{nn}(0)|\leq\frac{R}{\e}\]
and the proof is complete.
\end{proof}

\section{\texorpdfstring{Completion of the proof of Theorem \ref{th2}}   {Completion of the proof of Theorem 1.5}}
\label{sec6}
\setcounter{equation}{0}

As we emphasized in the introduction, we will derive a global curvature estimate for solutions of the  Dirichlet
problem (\ref{eq1.250}),(\ref{eq1.290}) . In Theorem \ref{th4.0} of the previous section we have shown that the principal
curvatures satisfy $0<\kappa_i \leq C,\, i=1,\ldots,n$ on $\Gamma=\partial \Omega$, hence lie in a compact set E of the cone K.
Since $f(\kappa)=\sigma$ and $f(\kappa) \goto 0$ uniformly on E when any $\kappa_i \goto 0$, it follows that
\be \label{eq6.10}
 \frac1C<\kappa_i \leq C\hspace{.1in} \mbox{ on $\Gamma$ }.
 \ee

We now appeal to  duality. By Corollary \ref{cor2.1}, the dual graph $S^*$ satisfies $f^*(\kappa)=\frac1{\sigma}$
with principal curvatures $\kappa_i^*=(\kappa_i)^{-1}$.  So by \eqref{eq6.10}
\be \label{eq6.30}
 \frac1C<\kappa_i ^* \leq C\hspace{.1in} \mbox{ on $\Gamma^*= L(\Gamma)$ }.
 \ee

Hence by the global maximum principal for principal curvatures proved in Theorem 4.1 of \cite{GS11},
\be \label{eq6.50}
 \frac1{\ol{C}}<\kappa_i ^* \leq \ol{C} \hspace{.1in} \mbox{ on $S^*$ }.
 \ee

Once more using duality to return to the graph S, we obtain the desired global estimate
\be \label{eq6.70}
 \frac1{\ol{C}}<\kappa_i \leq \ol{C} \hspace{.1in} \mbox{ on $S$ }.
 \ee

The proof of Theorem \ref{th1} follows by letting $\e \goto 0$ as mentioned in the introduction.\\

\bigskip

\end{document}